\def\R{\mathbb R}  \def\C{\mathbb C} \def\N{\mathbb N}
\newtheorem{thm}{Theorem}[section]
\newtheorem{lem}[thm]{Lemma}
\newtheorem{cor}[thm]{Corollary}
\newtheorem{defn}[thm]{Definition}
\newtheorem{rem}[thm]{Remark}
\numberwithin{equation}{section}
\title{Small data global well--posedness and scattering for the inhomogeneous nonlinear Schr\"{o}dinger equation in $H^{s} (\R^{n})$}
\author{{\bf JinMyong An, JinMyong Kim$^*$}\\
\footnotesize{Faculty of Mathematics, {\bf Kim Il Sung} University, Pyongyang, Democratic People's Republic of Korea}\\
\footnotesize{$^*$ Corresponding Author}\\
\footnotesize{Email address: jm.kim0211@ryongnamsan.edu.kp}
}
\date{}
\begin{document}
\maketitle
\begin{abstract}
We consider the Cauchy problem for the inhomogeneous nonlinear Schr\"{o}dinger (INLS) equation
\[iu_{t} +\Delta u=|x|^{-b} f\left(u\right), u\left(0\right)=u_{0} \in H^{s} (\R^{n}),\]
where $0<s<\min \left\{n,\;\frac{n}{2} +1\right\}$, $0<b<\min \left\{2,\;n-s,{\rm \; 1}+\frac{n-2s}{2} \right\}$ and $f\left(u\right)$ is a nonlinear function that behaves like $\lambda \left|u\right|^{\sigma } u$ with $\lambda \in \C$ and $\sigma >0$. We prove that the Cauchy problem of the INLS equation is globally well--posed in $H^{s} (\R^{n})$ if the initial data is sufficiently small and $\sigma _{0} <\sigma <\sigma _{s} $, where $\sigma _{0} =\frac{4-2b}{n} $ and $\sigma _{s} =\frac{4-2b}{n-2s} $ if $s<\frac{n}{2} $; $\sigma _{s} =\infty $ if $s\ge \frac{n}{2} $. Our global well--posedness result improves the one of Guzm\'{a}n in (Nonlinear Anal. Real World Appl. 37: 249--286, 2017) by extending the validity of $s$ and $b$. In addition, we also have the small data scattering result.
\end{abstract}
\noindent \textit {Keywords}: Inhomogeneous nonlinear Schr\"{o}dinger equation; Global well--posedness; Scattering; Strichartz estimates; Subcritical\\
\noindent \textit {2020 MSC}: 35Q55, 35A01

\section{Introduction}

In this paper, we study the Cauchy problem for the inhomogeneous nonlinear Schr\"{o}dinger (INLS) equation
\begin{equation} \label{GrindEQ__1_1_}
\left\{\begin{array}{l} {iu_{t} +\Delta u=|x|^{-b} f{\rm (}u{\rm ),}} \\ {u(0,\; x)=u_{0} (x),} \end{array}\right.
\end{equation}
where $u_{0} \in H^{s} (\R^{n})$, $0<s<\min \left\{n,\;\frac{n}{2} +1\right\}$ and $0<b<\min \left\{{\rm 2,\; }n-s,{\rm \; 1}+\frac{n-2s}{2} \right\}$ and $f$ is of class $X\left(\sigma,s,b\right)$ (see Definition 1.1).

Recall that (1.1) has the following equivalent form:
\begin{equation} \label{GrindEQ__1_2_}
u\left(t\right)=S\left(t\right)u_{0} -i\int _{0}^{t}S\left(t-\tau \right)|x|^{-b} f\left(u\left(\tau \right)\right)d\tau  ,
\end{equation}
where $S(t)=e^{it\Delta } $ is the Schr\"{o}dinger semi--group.

First of all, let us give the definition of class $X\left(\sigma,s,b\right)$.

\begin{defn}[\cite{AK21}]\label{defn 1.1}
\textnormal{Let $f:\C\to \C$, $s\ge 0$, $\sigma >0$, $0\le b<2$ and $\left\lceil s\right\rceil $ denote the minimal integer which is larger than or equals to $s$. For $k\in \N$, let $k$--th order derivative of $f(z)$ be defined under the identification $\C=\R^{2}$ $($see Section 2$)$.
We say that $f$ is of class $X\left(\sigma,s,b\right)$ if it satisfies one of the following conditions:}

\textnormal{$\cdot f\left(z\right)$ is a polynomial in $z$ and $\bar{z}$ satisfying that $1<\deg \left(f\right)=1+\sigma \le 1+\frac{4-2b}{n-2s},$ if $s<\frac{n}{2}$ and that $1<\deg \left(f\right)=1+\sigma <\infty $, if $s\ge \frac{n}{2}.$}

\textnormal{$\cdot f\in C^{\max\left\{\left\lceil s\right\rceil, 1 \right\}} \left(\C\to \C\right)$ and
\begin{equation} \label{GrindEQ__1_3_}
\left|f^{(k)}(z)\right|\lesssim\left|z\right|^{\sigma +1-k},
\end{equation}
for any $0\le k\le \max\left\{\left\lceil s\right\rceil, 1 \right\}$ and $z\in \C$, where we assume that $\left\lceil s\right\rceil-1 \le\sigma\le\frac{4-2b}{n-2s},$ if $s<\frac{n}{2}$ and that $\left\lceil s\right\rceil-1 \le\sigma <\infty $, if $s\ge \frac{n}{2}$.}
\end{defn}

\begin{rem}\label{rem1.2}
\textnormal{Let $s\ge 0$ and $0<b<2$. Assume that $0<\sigma \le \frac{4-2b}{n-2s} $, if $s<\frac{n}{2} $, and that $0<\sigma <\infty $, if $s\ge \frac{n}{2} $. If $\sigma $ is not an even integer, assume further $\left\lceil s\right\rceil \le\sigma +1$. Then we can easily verify that $f\left(u\right)=\lambda \left|u\right|^{\sigma } u$ with $\lambda \in \C$ is a model case of class $X\left(\sigma,s,b\right)$. See also \cite{DYC13,W04,WHHG11} for example.}
\end{rem}
When $b=0$, the equation (1.1) is the classic nonlinear Schr\"{o}dinger equation which has been widely studied over the last three decades. On the other hand, at the end of the last century, the inhomogeneous nonlinear Schr\"{o}dinger equation was suggested for modeling the propagation of laser beam in some situations, and it is of a form:
\begin{equation} \label{GrindEQ__1_4_}
iu_{t} +\Delta u+V\left(x\right)\left|u\right|^{\sigma } u=0.
\end{equation}

 For the physical background of (1.4), we can see \cite{G00, G17, LT94} and the references therein. Eq. (1.4) has been studied by several authors over the last two decades. For example, Merle \cite{M96} and Rapha\"el-Szeftel \cite{RS11} studied (1.4) assuming $k_{1} <V\left(x\right)<k_{2} $ with $k_{1} ,\;k_{2} >0$. Fibich-Wang \cite{FW03} studied (1.4) with $V\left(x\right)=V\left(\varepsilon \left|x\right|\right)$ where $\varepsilon >0$ is small and $V\in C^{4} (\R^{n})\bigcap L^{\infty } (\R^{n})$. The case $V\left(x\right)=\left|x\right|^{b} $ with $b>0$ was also studied by many authors (see e.g. \cite{C10,LWW06,Z14} and the references therein).

In this paper, we are interested in $V\left(x\right)=|x|^{-b} $ with $b>0$, i.e. we study the INLS equation (1.1). The INLS equation (1.1) has also attracted a lot of attention in recent years. We refer the reader to [1, 2, 5, 7--10, 12, 13, 15] for recent work on (1.1).

Before recalling the known results for the INLS equation (1.1), we define the following important numbers which are used throughout the paper:
\begin{equation} \label{GrindEQ__1_5_}
\sigma _{s} =\left\{\begin{array}{l} {\frac{4-2b}{n-2s} ,\;0\le s<\frac{n}{2} ,} \\ {\infty ,\;s\ge \frac{n}{2} .} \end{array}\right.
\end{equation}
\begin{equation} \label{GrindEQ__1_6_}
\hat{2}=\left\{\begin{array}{l} {\min \left\{2,{\rm \; 1}+\frac{n-2s}{2} \right\},\;n\ge 3,} \\ {n-s,\;n=1,\;2.} \end{array}\right.
\end{equation}
When $0\le s<\frac{n}{2} $, $\sigma _{s} $ is said be a critical power in $H^{s} (\R^{n})$ (see \cite{AK21,G17}). If $s\ge 0$, we say that $\sigma <\sigma _{s} $ is a subcritical power in $H^{s} (\R^{n})$.

Now let us recall the known well--posdness results for the INLS equation. Cazenave \cite{C03} studied the local and global well--posedness in
$H^{1} (\R^{n})$. Using an abstract theory, he proved that it is
appropriate to seek solution of (1.1) with $f\left(u\right)=\lambda \left|u\right|^{\sigma } u$ satisfying
\[u\in C\left(\left[0,{\rm \; }T\right){\rm ,\; }H^{1} \left(\R^{n}
\right)\right)\bigcap C^{1} \left(\left[0,{\rm \; }T\right){\rm ,\; }H^{-1}
(\R^{n})\right)\]
for some $T>0$. He also proved that any local solution of the INLS
equation (1.1) with $f\left(u\right)=\lambda \left|u\right|^{\sigma } u$, $\lambda>0$ and $u_{0} \in H^{1} (\R^{n})$
extends globally in time. Genoud--Stuart \cite{GS08} studied (1.1) by using abstract theory developed by Cazenave \cite{C03}. They showed that (1.1) with $f\left(u\right)=\lambda \left|u\right|^{\sigma } u$, $\lambda <0$ and $0<b<\min \left\{2,\;n\right\}$ is well--posed in $H^{1} (\R^{n})$:

$\cdot$  locally if $0<\sigma <\sigma _{1} $,

$\cdot$  globally for any initial data if $0<\sigma <\sigma _{0} $,

$\cdot$  globally for small initial data if $\sigma _{0} \le \sigma <\sigma _{1} $.

\noindent Later, Genoud \cite{G12} and Farah \cite{F16} studied (1.1) with $f\left(u\right)=\lambda \left|u\right|^{\sigma } u$, $\lambda <0$, $0<b<\min \{ 2,\;n\} $ and $\sigma _{0} \le \sigma <\sigma _{1} $ by using sharp Gagliardo--Nirenberg inequalities. They proved that the solution $u$ of (1.1) is globally defined in $H^{1} (\R^{n})$ quantifying the smallness condition of the initial data. Guzm\'{a}n \cite{G17} established the well--posedness for (1.1) with $f\left(u\right)=\lambda \left|u\right|^{\sigma } u$, $\lambda \in \R$ by using Strichartz estimates. Precisely, he showed that:

$\cdot$  if $0<\sigma <\sigma _{0} $, and $0<b<\min \left\{2,\;n\right\}$, then (1.1) is globally well--posed in $L^{2} (\R^{n})$,

$\cdot$  if $0<s\le \min \left\{1,\;\frac{n}{2} \right\}$, $0<b<\tilde{2}$ and $0<\sigma <\sigma _{s} $, then (1.1) is locally well--posed in $H^{s} (\R^{n})$, where
\begin{equation} \label{GrindEQ__1_7_}
\tilde{2}=\left\{\begin{array}{l} {\frac{n}{3} ,\;n=1,\;2,\;3,} \\ {2,\;n\ge 4,} \end{array}\right.
\end{equation}

$\cdot$  if $0<s\le \min \left\{1,\;\frac{n}{2} \right\}$, $0<b<\tilde{2}$ and $\sigma _{0} <\sigma <\sigma _{s} $, then (1.1) is globally well--posed in $H^{s} (\R^{n})$ for small initial data.

Recently, the authors in \cite{AK21} improved the local well--posedness result of \cite{G17} by extending the validity of $s$ and $b$. More precisely, they proved that (1.1) is locally well--posed in $H^{s} \left(\R^{n}\right)$, if $0\le s<\min \left\{n,\;\frac{n}{2} +1\right\}$, $0<b<\hat{2}$ and $0<\sigma <\sigma _{s}$. But they didn't study the global well--posedness of (1.1).

We also refer the reader to [5, 7--10] for the scattering and blow--up results for (1.1).

The purpose of this paper is to obtain the small data global well--posedness and scattering results in $H^{s} (\R^{n})$ with $0<s<\min \left\{n,\;\frac{n}{2} +1\right\}$ for the INLS equation (1.1). To arrive at this goal, we establish various nonlinear estimates and use the contraction mapping principle based on Strichartz estimates. Our results improve the global well--posedness result of Guzm\'{a}n \cite{G17} by extending the validity of $s$ and $b$.

The first main result of this paper is the following small data global well--posedness result in $H^{s} (\R^{n})$ with $0<s<\min \left\{n,\;\frac{n}{2} +1\right\}$.

\begin{thm}\label{thm 1.3.}
Let $n\in \N$, $0<s<\min \left\{n,\;\frac{n}{2} +1\right\}$, $0<b<\hat{2}$ and $\sigma _{0} <\sigma <\sigma _{s} $. Assume that $f$ is of class $X\left(\sigma,s,b\right)$. Then there exists a corresponding $\delta >0$ such that for any $u_{0} \in H^{s} (\R^{n})$ satisfying $\left\| u_{0} \right\| _{H^{s} (\R^{n})} \le \delta $, $(1.1)$ has a unique global solution satisfying
\begin{equation} \label{GrindEQ__1_8_}
u\in C\left(\R,\;H^{s} (\R^{n})\right)\bigcap L^{\gamma \left(p\right)} \left(\R,\;H_{p}^{s} (\R^{n})\right),
\end{equation}
for any admissible pair $\left(\gamma \left(p\right),\;p\right)$.
\end{thm}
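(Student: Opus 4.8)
The plan is to run a fixed-point argument for the integral equation (1.2) in a complete metric space built from the Strichartz norms, exploiting that $\sigma>\sigma_0$ makes the nonlinearity energy-subcritical while $\sigma<\sigma_s$ keeps it below the $H^s$-critical threshold. First I would fix a pair of exponents: a ``scaling-admissible'' Lebesgue exponent $p$ (and its companion $\gamma(p)$ through the admissibility relation) together with the conjugate exponents dictated by the nonlinear estimate, chosen so that $|x|^{-b}f(u)$ can be placed in a dual Strichartz space. Because $b<\hat 2$ one must split $\R^n$ into the region near the origin $|x|\le 1$ and the region $|x|\ge 1$; on the unit ball $|x|^{-b}$ is integrated against an $L^r$ function with $r<n/b$, and away from the ball it is bounded, so in each region one gets a Hölder estimate with a controlled loss. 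I would then define the working space
\[
X=\Bigl\{u\in C(\R,H^s)\cap\bigcap_{(\gamma(p),p)\text{ admissible}}L^{\gamma(p)}(\R,H^s_p):\ \|u\|_X\le M\Bigr\},
\]
with $\|u\|_X$ the supremum over the relevant Strichartz norms, and the metric $d(u,v)$ given by the same norms at the $L^{\tilde p}$ (not $H^s_{\tilde p}$) level, which is the standard device to make the ball complete.

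The core of the argument is the nonlinear estimate: I would prove
\[
\bigl\||x|^{-b}f(u)\bigr\|_{L^{\gamma'}(\R,H^{s}_{\rho'})}\lesssim \|u\|_X^{\sigma}\,\|u\|_X
\]
for a suitable admissible pair $(\gamma,\rho)$, and a corresponding difference estimate
\[
\bigl\||x|^{-b}(f(u)-f(v))\bigr\|_{L^{\gamma'}(\R,L^{\rho'})}\lesssim \bigl(\|u\|_X^{\sigma}+\|v\|_X^{\sigma}\bigr)\,d(u,v).
\]
For the derivative of order up to $\lceil s\rceil$ of $f(u)$ one uses the pointwise bounds (1.3) from Definition~\ref{defn 1.1}, the fractional Leibniz and chain rules, and fractional Sobolev embeddings $H^s_p\hookrightarrow L^q$; the weight $|x|^{-b}$ is handled by the Hardy–Sobolev/fractional integration inequality after the region split described above, and the fact that $\sigma>\sigma_0=\frac{4-2b}{n}$ is exactly what lets the time exponents close on the whole line $\R$ (rather than on a finite interval), giving the global, not merely local, statement. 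The constraints $0<b<\hat 2$ and $\sigma<\sigma_s$ are what guarantee the needed exponents are in the admissible range and that all Sobolev embeddings used are legitimate.

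With these two estimates in hand, the map $\Phi(u)(t)=S(t)u_0-i\int_0^tS(t-\tau)|x|^{-b}f(u(\tau))\,d\tau$ is shown, via the Strichartz inequalities applied to the linear and Duhamel parts, to satisfy $\|\Phi(u)\|_X\le C\|u_0\|_{H^s}+CM^{1+\sigma}$ and $d(\Phi(u),\Phi(v))\le CM^{\sigma}d(u,v)$. Choosing $M$ so that $CM^{\sigma}\le\tfrac12$ and then $\delta$ so that $C\delta\le M/2$, the map is a contraction on $(X,d)$, so it has a unique fixed point, which is the desired global solution; the membership $u\in C(\R,H^s)$ follows from the continuity of the Strichartz flow and a standard limiting argument, and uniqueness in the full class follows by the usual ``unconditional on small intervals then patch'' reasoning. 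I expect the main obstacle to be the nonlinear estimate in the presence of the singular weight when $s$ is large (close to $\tfrac n2+1$) and $b$ is close to the endpoint $\hat2$: keeping track of which Strichartz pair to use so that after the region split and the fractional Leibniz rule every factor lands in an admissible space, while simultaneously preserving the global-in-time integrability that $\sigma>\sigma_0$ provides, is the delicate bookkeeping that makes the theorem nontrivial.
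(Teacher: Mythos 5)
Your proposal follows essentially the same route as the paper: a contraction mapping for the Duhamel formulation in the ball $\{\|u\|_{S(H^s)}\le M\}$ equipped with the weaker metric $d(u,v)=\|u-v\|_{S(L^2)}$, with the nonlinear estimates obtained by splitting $\R^n$ into the unit ball and its complement, applying H\"older against $|x|^{-b}$ in each region, and using the fractional chain/product rules together with Sobolev embeddings (the paper's Lemmas 3.5--3.6), then closing with $M=2C\|u_0\|_{H^s}$ and $\delta$ small so that $CM^{\sigma}\le \tfrac14$. The only cosmetic difference is that the paper handles the weight by direct $L^{\gamma}$-integrability of $|x|^{-b}$ on each region (Remark 3.4) rather than by a Hardy--Sobolev inequality, which amounts to the same bookkeeping.
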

If $0<s\le 1$, we can see that
\begin{equation} \label{GrindEQ__1_9_}
\hat{2}=\left\{\begin{array}{l} {2,\;n\ge 4,} \\ {\min \left\{2,\;\frac{5}{2}-s\right\},\;n=3,} \\ {n-s,\;n=1,\;2,} \end{array}\right.
\end{equation}
and $\left\lceil s\right\rceil =1<\sigma +1$. Thus we have the following global well--poseness results in $H^{s} (\R^{n})$ with $0<s\le 1$.

\begin{cor}\label{cor 1.4.}
Let $n\in \N$, $0<s<1$, $0<b<\hat{2}$ and $\sigma _{0} <\sigma <\sigma _{s} $. Assume that $f\left(u\right)=\lambda \left|u\right|^{\sigma } u$ with $\lambda \in \C$. Then there exists a corresponding $\delta >0$ such that for any $u_{0} \in H^{s} (\R^{n})$ satisfying $\left\| u_{0} \right\| _{H^{s} (\R^{n})} \le \delta $, $(1.1)$ has a unique global solution satisfying $(1.8)$.
\end{cor}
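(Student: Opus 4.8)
The plan is to deduce Corollary~\ref{cor 1.4.} directly from Theorem~\ref{thm 1.3.}; the only thing that requires checking is that, under the hypotheses $0<s<1$, $0<b<\hat 2$ and $\sigma_0<\sigma<\sigma_s$, the pure power nonlinearity $f(u)=\lambda|u|^\sigma u$ is of class $X(\sigma,s,b)$, so that Theorem~\ref{thm 1.3.} applies verbatim. First I would record the shape of $\hat 2$ in this range, i.e.\ confirm $(1.9)$: since $0<s<1$, for $n\ge 4$ one has $1+\frac{n-2s}{2}=\frac n2+1-s>2$ (because $n\ge 4>2+2s$), so $\hat 2=\min\{2,\,1+\frac{n-2s}{2}\}=2$; for $n=3$ one gets $1+\frac{n-2s}{2}=\frac52-s$, hence $\hat 2=\min\{2,\,\frac52-s\}$; and for $n=1,2$ the definition $(1.6)$ gives $\hat 2=n-s$. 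In particular $\hat 2\le 2$, so the assumption $0<b<\hat 2$ forces $0<b<2$.

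Next I would verify the class membership through Remark~\ref{rem1.2}. Since $0<s<1$ we have $\left\lceil s\right\rceil=1$, and $\sigma>\sigma_0=\frac{4-2b}{n}>0$ gives $\left\lceil s\right\rceil=1\le \sigma+1$, so the extra requirement imposed in Remark~\ref{rem1.2} when $\sigma$ is not an even integer is automatically satisfied. Moreover $0<b<2$ has just been checked, and the strict bound $0<\sigma<\sigma_s$ implies the (non-strict) condition $0<\sigma\le\sigma_s$ required in Remark~\ref{rem1.2}, in both cases $s<\frac n2$ and $s\ge\frac n2$ (the latter only possibly occurring when $n=1$, where $\frac n2=\frac12$). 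Hence Remark~\ref{rem1.2} applies and $f(u)=\lambda|u|^\sigma u\in X(\sigma,s,b)$. All hypotheses of Theorem~\ref{thm 1.3.} being met, the conclusion of that theorem — existence of $\delta>0$ such that every $u_0\in H^s(\R^n)$ with $\|u_0\|_{H^s(\R^n)}\le\delta$ launches a unique global solution obeying $(1.8)$ for every admissible pair — is precisely the assertion of Corollary~\ref{cor 1.4.}. There is no genuine obstacle here: the content of the corollary is entirely contained in Theorem~\ref{thm 1.3.} together with the elementary verification above, the only mildly delicate point being to keep track of the three cases of $n$ in computing $\hat 2$ and the harmless even-integer proviso in Remark~\ref{rem1.2}.
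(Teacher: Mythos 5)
Your proposal is correct and matches the paper's (implicit) argument: the paper justifies Corollary 1.4 exactly by noting that for $0<s\le 1$ the expression $(1.9)$ for $\hat{2}$ holds and $\left\lceil s\right\rceil =1<\sigma +1$, so that Remark 1.2 places $f(u)=\lambda |u|^{\sigma}u$ in the class $X(\sigma,s,b)$ and Theorem 1.3 applies directly. Your verification of the three cases of $\hat{2}$ and of the hypotheses of Remark 1.2 is exactly this reduction, carried out in slightly more detail.
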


\begin{cor}\label{cor 1.5.}
Let $n\ge 2$, $s=1$, $0<b<\hat{2}$ and $\sigma _{0} <\sigma <\sigma _{s} $. Assume that $f\left(u\right)=\lambda \left|u\right|^{\sigma } u$ with $\lambda \in \C$. Then there exists a corresponding $\delta >0$ such that for any $u_{0} \in H^{1} (\R^{n})$ satisfying $\left\| u_{0} \right\| _{H^{1} (\R^{n})} \le \delta $, $(1.1)$ has a unique global solution satisfying $(1.8)$.
\end{cor}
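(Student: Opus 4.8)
Since Corollary 1.5 is the special case $f(u)=\lambda|u|^{\sigma}u$, $s=1$ of Theorem 1.3, it suffices to prove the latter and then invoke Remark 1.2: for $s=1$ one has $\lceil s\rceil=1\le\sigma+1$ automatically (as $\sigma>0$), so $f(u)=\lambda|u|^{\sigma}u$ is of class $X(\sigma,1,b)$; the hypothesis $n\ge2$ gives $s=1<\min\{n,\frac n2+1\}$; and formula (1.9) identifies the value of $\hat2$ appearing in Theorem 1.3. I would prove Theorem 1.3 by a contraction-mapping argument for the integral equation (1.2) carried out on the whole line $\R$, using Strichartz estimates and exploiting the smallness of $\|u_0\|_{H^s}$. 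Because $\sigma_0<\sigma<\sigma_s$ the problem is $L^2$-supercritical and $H^s$-subcritical, so there is a window of Schr\"odinger-admissible pairs in which the Duhamel term is controlled by a power $\sigma+1$ of the solution's Strichartz norm; it is this superlinear power (not any smallness coming from a short time interval) that closes the iteration globally.

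First I would record, for every admissible pair $(\gamma(p),p)$ (so that $\frac{2}{\gamma(p)}=n(\frac12-\frac1p)$, with the usual endpoint exclusions), the homogeneous and inhomogeneous Strichartz estimates
\[
\|S(t)u_0\|_{L^{\gamma(p)}(\R,\,H^s_p)}\lesssim\|u_0\|_{H^s},\qquad
\Big\|\int_0^tS(t-\tau)F(\tau)\,d\tau\Big\|_{L^{\gamma(p)}(\R,\,H^s_p)}\lesssim\|F\|_{L^{\gamma(q)'}(\R,\,H^s_{q'})},
\]
valid for all admissible $(\gamma(p),p)$, $(\gamma(q),q)$. The crux is then a \emph{nonlinear estimate}: to choose one or two auxiliary pairs $(\gamma(q),q)$, depending on $n,s,b,\sigma$, so that $\||x|^{-b}f(u)\|_{L^{\gamma(q)'}(\R,\,H^s_{q'})}$ is dominated by a product of Strichartz norms of $u$ of total homogeneity $\sigma+1$. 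To prove this I would split $\R^n=\{|x|\le1\}\cup\{|x|>1\}$. On $\{|x|>1\}$ the weight $|x|^{-b}$ is bounded and the estimate proceeds as for the ordinary NLS: H\"older in $x$ and $t$, the fractional Leibniz and chain rules combined with the growth bounds on $f$ and its derivatives that membership in $X(\sigma,s,b)$ supplies (the pointwise estimates $|f^{(k)}(z)|\lesssim|z|^{\sigma+1-k}$ of Definition 1.1 for $0\le k\le\lceil s\rceil$ in the differentiable case, direct algebra in the polynomial case), and Sobolev embedding; here $\sigma>\sigma_0$ is exactly what makes the resulting time exponent finite, so the $L^{\gamma(q)'}_t$-norm over all of $\R$ costs nothing, while $\sigma<\sigma_s$ keeps every Lebesgue/Sobolev exponent in the admissible range. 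On $\{|x|\le1\}$ one uses $|x|^{-b}\in L^d(\{|x|\le1\})$ for every $d<n/b$ to absorb the singularity with an extra H\"older factor; distributing $s$ fractional derivatives over the product $|x|^{-b}f(u)$ near the origin is precisely where $0<b<\hat2$ and $s<\min\{n,\frac n2+1\}$ enter, through the integrability of $|x|^{-b}$, of $\nabla(|x|^{-b})\sim|x|^{-b-1}$, and — when $s>1$ — of suitable Hardy-type inequalities. I would isolate these computations as preliminary lemmas, in the spirit of the local theory of \cite{AK21}.

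With the nonlinear estimate available, for $M>0$ set
\[
B_M=\Big\{u:\ \|u\|_{L^{\gamma(p)}(\R,\,H^s_p)}\le M\ \text{for each of the finitely many pairs used}\Big\},
\]
and metrize it by the weaker distance $d(u,v)=\sup_p\|u-v\|_{L^{\gamma(p)}(\R,\,L^p)}$ measuring only $L^2$-level regularity — the customary device that keeps the contraction estimate at first order in the difference, using only $|f'(z)|\lesssim|z|^{\sigma}$. On $(B_M,d)$, which is complete, define $\Phi(u)(t)=S(t)u_0-i\int_0^tS(t-\tau)|x|^{-b}f(u(\tau))\,d\tau$; the Strichartz and nonlinear estimates yield $\|\Phi(u)\|_{L^{\gamma(p)}(\R,\,H^s_p)}\le C\|u_0\|_{H^s}+CM^{\sigma+1}$ and $d(\Phi(u),\Phi(v))\le CM^{\sigma}\,d(u,v)$. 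Taking $M=2C\delta$ and then $\delta$ small enough that $C(2C\delta)^{\sigma+1}\le C\delta$ and $C(2C\delta)^{\sigma}\le\frac12$, the map $\Phi$ is a contraction on $B_M$ and has a unique fixed point $u$. Feeding $u$ back into the Strichartz estimates promotes it to $L^{\gamma(p)}(\R,\,H^s_p)$ for \emph{every} admissible pair, and the $p=2$ case together with continuity of translations and dominated convergence gives $u\in C(\R,\,H^s)$, so that (1.8) holds; uniqueness in that class follows by rerunning the difference estimate on arbitrary subintervals. Corollary 1.4 is obtained in the same way (for $0<s<1$ one again has $\lceil s\rceil=1\le\sigma+1$). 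The main obstacle is the nonlinear estimate near the origin, where three demands compete: absorbing the weight $|x|^{-b}$, which is singular at $0$ but merely bounded at $\infty$; producing a \emph{finite} time exponent so the estimate is global (this forces $\sigma>\sigma_0$); and spreading $s$ fractional derivatives over $|x|^{-b}$ and $f(u)$ without leaving the admissible Sobolev/Lebesgue range, which is exactly what $b<\hat2$ and $s<\frac n2+1$ provide and where the present paper improves on Guzm\'an's restriction $s\le\min\{1,\frac n2\}$. Checking that the H\"older exponents can be selected consistently within these constraints, particularly near the endpoints of the $b$- and $\sigma$-ranges, is the only genuinely delicate bookkeeping; everything else is the standard Kato--Cazenave contraction scheme.
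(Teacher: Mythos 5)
Your reduction of Corollary 1.5 to Theorem 1.3 is precisely the paper's (implicit) proof of it: for $s=1$ one has $\lceil s\rceil=1\le\sigma+1$, so $f(u)=\lambda|u|^{\sigma}u$ is of class $X(\sigma,1,b)$ by Remark 1.2, and $n\ge 2$ guarantees $1<\min\{n,\tfrac n2+1\}$. The contraction scheme you outline for Theorem 1.3 --- a ball in the $S(H^{s})$ norm metrized by the weaker $S(L^{2})$ distance, $M\sim\|u_{0}\|_{H^{s}}$, the power $\sigma+1$ closing the loop --- is also exactly the paper's Section 4 argument, and the splitting of $\R^{n}$ at $|x|=1$ together with the fractional Leibniz and chain rules is the content of Lemmas 3.5 and 3.6.

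There is, however, one step in your sketch that would fail as written: on $\{|x|>1\}$ you propose to use only that $|x|^{-b}$ is bounded and to ``proceed as for the ordinary NLS.'' For the unweighted nonlinearity $|u|^{\sigma}u$, a global-in-time Strichartz closure of the form $\||u|^{\sigma}u\|_{S'(L^{2})}\lesssim\|u\|_{S(H^{s})}^{\sigma}\|u\|_{S(L^{2})}$ is available only for $\sigma\ge 4/n$ (mass-critical or supercritical); but the corollary admits every $\sigma>\sigma_{0}=\tfrac{4-2b}{n}$, and since $b>0$ the window $\sigma_{0}<\sigma<4/n$ is nonempty. In that window no choice of admissible exponents makes the time integrability match for the unweighted estimate, so boundedness of the weight at infinity is not enough. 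The paper's exterior terms $C_{1},D_{1}$ in Lemma 3.5 instead H\"older against $\|\chi_{B^{C}}|x|^{-b}\|_{L^{\gamma_{1}}}$ with $n/\gamma_{1}<b$ (Remark 3.4), i.e.\ they exploit the \emph{decay} of the weight at infinity, and it is precisely this gain of $b$ in the scaling that lowers the small-data threshold from $4/n$ to $\tfrac{4-2b}{n}$ --- consistent with your own (correct) observation that $\sigma>\sigma_{0}$ is what makes the time exponent work, but inconsistent with treating the weight as merely bounded there. Everything else in your outline matches the paper; only the exterior region needs the $L^{\gamma_{1}}(B^{C})$ integrability of $|x|^{-b}$ rather than its boundedness.
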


\begin{rem}\label{rem 1.6}
\textnormal{Corollary 1.5 and Corollary 1.6 improve the global well--poseness result of \cite{G17}. First, Guzm\'{a}n \cite{G17} didn't treat the case $\frac{1}{2}<s<1$ for $n=1$, while Corollary 1.5 holds for any $n\in \N$ and $0<s<1$. Next, we can easily see that $\tilde{2}<\hat{2}$ for $n\le 3$ and $0<s\le \min \left\{1,\;\frac{n}{2} \right\}$.}
\end{rem}

We also have the following scattering result.

\begin{thm}\label{thm 1.7.}
Let $n\in \N$, $0<s<\min \left\{n,\;\frac{n}{2} +1\right\}$, $0<b<\hat{2}$ and $\sigma _{0} <\sigma <\sigma _{s} $. Assume that $f$ is of class $X\left(\sigma,s,b\right)$. Let $u_{0} \in H^{s} (\R^{n})$ satisfying $\left\| u_{0} \right\| _{H^{s} (\R^{n})} \le \delta $, where $\delta $ is as in Theorem 1.3, and let $u\left(x,\;t\right)$ be the solution of $(1.1)$. Then there exist $u_{0}^{\pm } \in H^{s} (\R^{n})$ such that
\begin{equation} \label{GrindEQ__1_10_}
{\mathop{\lim }\limits_{t\to \pm \infty }} \left\| u(t)-e^{it\Delta } u_{0}^{\pm } \right\| _{H^{s} (\R^{n})} =0.
\end{equation}
\end{thm}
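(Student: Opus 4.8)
The plan is to deduce Theorem \ref{thm 1.7.} from Theorem \ref{thm 1.3.} by the classical small--data scattering argument: exhibit the asymptotic states as convergent Duhamel integrals, using nothing beyond the Strichartz and nonlinear estimates already employed in the proof of Theorem \ref{thm 1.3.}. By Theorem \ref{thm 1.3.} the solution $u$ is global and
\[
u\in C(\R,H^s(\R^n))\cap L^{\gamma(p)}(\R,H_p^s(\R^n))\qquad\text{for every admissible pair }(\gamma(p),p),
\]
so all the mixed space--time norms that control the Duhamel term in the fixed--point scheme are finite over the whole line (for norms involving a lower smoothness or a different exponent one uses Sobolev embedding). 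Put
\[
u_0^{\pm}:=u_0-i\int_0^{\pm\infty}S(-\tau)\,|x|^{-b}f(u(\tau))\,d\tau ,
\]
and the first step is to show that these improper integrals converge in $H^s(\R^n)$, which in particular gives $u_0^{\pm}\in H^s(\R^n)$.

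For the convergence I would fix $0<t_1<t_2$ and, using that $S(t)=e^{it\Delta}$ is unitary on $H^s(\R^n)$ together with the inhomogeneous (dual) Strichartz estimate on the interval $(t_1,t_2)$, bound
\[
\Bigl\| \int_{t_1}^{t_2}S(-\tau)\,|x|^{-b}f(u(\tau))\,d\tau \Bigr\|_{H^s(\R^n)}
\lesssim \bigl\| |x|^{-b}f(u) \bigr\|_{L^{\gamma'(a)}((t_1,t_2),H_{a'}^s(\R^n))}
\]
for an appropriate admissible pair $(\gamma(a),a)$, and then invoke the very nonlinear estimate used for the contraction in Theorem \ref{thm 1.3.}, which dominates the right--hand side by a product of norms $\|u\|_{L^{\gamma(q_j)}((t_1,t_2),H_{q_j}^s(\R^n))}$ of total homogeneity $\sigma+1$. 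Since $\sigma_0<\sigma<\sigma_s$, the controlling admissible pairs may be chosen with finite time exponents $\gamma(q_j)<\infty$, and since the corresponding norms on all of $\R$ are finite, each factor restricted to $(t_1,t_2)$ tends to $0$ as $t_1\to\infty$, uniformly in $t_2>t_1$. Because (\ref{GrindEQ__1_2_}) gives $S(-t)u(t)=u_0-i\int_0^t S(-\tau)|x|^{-b}f(u(\tau))\,d\tau$, this says precisely that $t\mapsto S(-t)u(t)$ is Cauchy in $H^s(\R^n)$ as $t\to+\infty$; hence the integral converges and $u_0^{+}\in H^s(\R^n)$. The case $t\to-\infty$ is identical.

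To finish, I would use (\ref{GrindEQ__1_2_}) to write, for $t>0$,
\[
u(t)-S(t)u_0^{+}=i\int_t^{\infty}S(t-\tau)\,|x|^{-b}f(u(\tau))\,d\tau ,
\]
apply $S(-t)$ and repeat the Strichartz plus nonlinear estimates on the tail interval $(t,\infty)$ to obtain
\[
\bigl\| u(t)-e^{it\Delta}u_0^{+} \bigr\|_{H^s(\R^n)}
\lesssim \prod_j \|u\|_{L^{\gamma(q_j)}((t,\infty),H_{q_j}^s(\R^n))}^{\alpha_j}\longrightarrow 0\quad\text{as }t\to+\infty ,
\]
and symmetrically, with $(t,\infty)$ replaced by $(-\infty,t)$, for $t\to-\infty$; this is (\ref{GrindEQ__1_10_}). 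I expect the only point requiring care to be bookkeeping rather than a genuine obstacle: one must check that the nonlinear estimate imported from the proof of Theorem \ref{thm 1.3.} is global in time, i.e.\ carries no positive power of the length of the time interval and only controlling norms with finite time exponent, so that the tail contributions really vanish. This is exactly what the strict subcriticality $\sigma>\sigma_0$ provides, and with it in hand the remainder is the routine Strichartz machinery already set up for Theorem \ref{thm 1.3.}.
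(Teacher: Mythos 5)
Your proposal is correct and follows essentially the same route as the paper: the paper also deduces scattering from the global bound $\left\| u\right\| _{S\left(H^{s} \right)}<\infty$ of Theorem 1.3 by showing $e^{-it\Delta }u(t)$ is Cauchy in $H^{s}$ via the Strichartz estimates together with Lemmas 3.5 and 3.6 applied on $(t_{1},t_{2})$. Your extra remark about choosing controlling pairs with finite time exponent is exactly the (implicit) reason the tail norms vanish in the paper's argument as well.
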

This paper is organized as follows. In Section 2, we introduce some natation and give some preliminary results related to our problem. In Section 3, we establish various nonlinear estimates. In Section 4, we prove Theorem 1.3 and Theorem 1.4.

\noindent
\section{Preliminaries}

Let us introduce some natation used in this paper. $\mathfrak{F}$ denotes the Fourier transform; $\mathfrak{F}^{-1} $ denotes the inverse Fourier transform. Given $p\in [1,\;\infty ]$, we denote by $p'$ the conjugate exponent of $p$, i.e. $1/p+1/p'=1$. $C\left(>0\right)$ stands for the universal constant, which can be different at different places. $a\lesssim b$ means $a\le Cb$ for some constant $C>0$. For $s\in\R$, we denote by $\left[s\right]$ the largest integer which is less than or equals to $s$ and by $\left\lceil s\right\rceil $ the minimal integer which is larger than or equals to $s$. For $U \subset \R^{n}$, $\chi_{U}$ denotes the characteristic function of $U$, i.e. $\chi_{U}(x)=1$ for $x\in U$, and $\chi_{U}(x)=0$ for $x \in U^{C}$. For the multi-index $\alpha =\left(\alpha _{1} ,\;\alpha _{2} \;,\;\ldots ,\;\alpha _{n} \right)$, denote
\[D^{\alpha } =\partial _{x_{1} }^{\alpha _{1} } \cdots \partial _{x_{n} }^{\alpha _{n} },\; \left|\alpha \right|=\left|\alpha _{1} \right|+\cdots +\left|\alpha _{n} \right|.\]
Given normed spaces $X$ and $Y$, $X\subset Y$ means that $X$ is continuously embedded in $Y$, i.e. there exists a constant $C\left(>0\right)$ such that $\left\| f\right\| _{Y} \le C\left\| f\right\| _{X} $ for all $f\in X$. For a function $f(z)$ defined for a complex variable $z$ and for a positive integer $k$, $k$--th order derivative of $f(z)$ is defined by
\[f^{(k)}(z):=\left(\frac{\partial ^{k} f}{\partial z^{k} } ,\; \frac{\partial ^{k} f}{\partial z^{k-1} \partial \bar{z}} ,\; {\dots},\; \frac{\partial ^{k} f}{\partial z\partial \bar{z}^{k-1} }, \;\frac{\partial ^{k} f}{\partial \bar{z}^{k} } \right),\]
where
\[\frac{\partial f}{\partial z}=\frac{1}{2} \left(\frac{\partial f}{\partial x} -i\frac{\partial f}{\partial y} \right),\; \frac{\partial f}{\partial \bar{z}}=\frac{1}{2} \left(\frac{\partial f}{\partial x} +i\frac{\partial f}{\partial y} \right).\]
We also define its norm as
\[\left|f^{\left(k\right)} (z)\right|:=\sum _{i=0}^{k}\left|\frac{\partial ^{k} f}{\partial z^{k-i} \partial \bar{z}^{i} } \right| .\]
As in \cite{WHHG11}, for $s\in \R$ and $1<p<\infty $, we denote by $H_{p}^{s} (\R^{n} )$ and $\dot{H}_{p}^{s} (\R^{n} )$ the nonhomogeneous Sobolev space and homogeneous Sobolev space, respectively. The norms of these spaces are given as
\[\;\left\| f\right\| _{H_{p}^{s} (\R^{n} )} =\left\| J_{s} f\right\| _{L^{p} (\R^{n} )} , \;\left\| f\right\| _{\dot{H}_{p}^{s} (\R^{n} )} =\left\| I_{s} f\right\| _{L^{p} (\R^{n} )} ,\]
where $J_{s} =\mathfrak{F}^{-1} \left(1+\left|\xi \right|^{2} \right)^{\frac{s}{2} } \mathfrak{F}$ and $I_{s} =\mathfrak{F}^{-1} \left|\xi \right|^{s} \mathfrak{F}$. As usual, we abbreviate $H_{2}^{s} (\R^{n} )$ and $\dot{H}_{2}^{s} (\R^{n} )$ as $H^{s} (\R^{n} )$ and $\dot{H}^{s} (\R^{n} )$, respectively. For $I\subset \R$ and $\gamma \in \left[1,\infty \;\right]$, the norm of $L^{\gamma } \left(I,\;X(\R^{n})\right)$ is defined by
\[\left\| f\right\| _{L^{\gamma } \left(I,\;X(\R^{n})\right)} =\left(\int _{I}\left\| f\right\| _{X(\R^{n})}^{\gamma } dt \right)^{\frac{1}{\gamma } } ,\]
with a usual modification when $\gamma =\infty $, where $X(\R^{n} )$ is a normed space on $\R^{n} $ such as Lebesgue space or Sobolev space. We will omit $\R^{n} $ in various function spaces, if there is no confusion.

Next, we recall some useful facts and estimates which are used in this paper.

First of all, we state the following fundamental result which is useful to treat the case $s\ge 1$.

\begin{lem}[\cite{AK21}]\label{lem 2.1.}
Let $s>0$, $1<p<\infty $ and $v=s-\left[s\right]$. Then $\sum _{\left|\alpha \right|=\left[s\right]}\left\|D^{\alpha }f\right\| _{\dot{H}_{p}^{v}}  $ is an equivalent norm on $\dot{H}_{p}^{s} (\R^{n})$.
\end{lem}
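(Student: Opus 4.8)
The plan is to reduce the stated norm equivalence to the $L^{p}$--boundedness of a finite family of homogeneous degree--zero Fourier multipliers, which is precisely what the Mikhlin--H\"{o}rmander multiplier theorem supplies for $1<p<\infty$. Set $m=[s]$, so that $v=s-m\in[0,1)$; when $m=0$ the assertion is trivial (the sum is just $\left\|f\right\|_{\dot{H}^{s}_{p}}$), so I would assume $m\ge 1$. Recall that $I_{s}$ and $I_{v}$ are the Fourier multipliers with symbols $\left|\xi\right|^{s}$ and $\left|\xi\right|^{v}$, and that $D^{\alpha}$ is the Fourier multiplier with symbol $i^{|\alpha|}\xi^{\alpha}$. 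It then suffices to prove the two--sided estimate $\left\|I_{s}f\right\|_{L^{p}}\approx\sum_{|\alpha|=m}\left\|I_{v}D^{\alpha}f\right\|_{L^{p}}$ on a dense class of functions (Schwartz functions, treated modulo polynomials as is customary for the homogeneous spaces $\dot{H}^{\sigma}_{p}$) and then pass to the completion.

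The central observation is that for each multi--index $\alpha$ with $|\alpha|=m$ the function $\mu_{\alpha}(\xi)=\xi^{\alpha}/|\xi|^{m}$ is smooth on $\R^{n}\setminus\{0\}$ and homogeneous of degree $0$; hence $|\partial^{\beta}\mu_{\alpha}(\xi)|\lesssim|\xi|^{-|\beta|}$ for every multi--index $\beta$, so the associated multiplier operator $T_{\mu_{\alpha}}$ is bounded on $L^{p}(\R^{n})$. For the upper bound I would use the symbol identity $|\xi|^{v}\cdot i^{m}\xi^{\alpha}=i^{m}\mu_{\alpha}(\xi)\,|\xi|^{s}$, that is $I_{v}D^{\alpha}f=i^{m}T_{\mu_{\alpha}}(I_{s}f)$, which gives $\left\|I_{v}D^{\alpha}f\right\|_{L^{p}}\lesssim\left\|I_{s}f\right\|_{L^{p}}=\left\|f\right\|_{\dot{H}^{s}_{p}}$; summing over the finitely many such $\alpha$ yields one inequality.

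For the reverse inequality I would invert the relation at the symbol level using the multinomial identity $|\xi|^{2m}=\sum_{|\alpha|=m}\binom{m}{\alpha}\xi^{2\alpha}$, which gives $|\xi|^{s}=|\xi|^{v+m}=\sum_{|\alpha|=m}\binom{m}{\alpha}\mu_{\alpha}(\xi)\,\bigl(|\xi|^{v}\xi^{\alpha}\bigr)$ and therefore $I_{s}f=i^{-m}\sum_{|\alpha|=m}\binom{m}{\alpha}T_{\mu_{\alpha}}\bigl(I_{v}D^{\alpha}f\bigr)$. Invoking the $L^{p}$--boundedness of each $T_{\mu_{\alpha}}$ then yields $\left\|f\right\|_{\dot{H}^{s}_{p}}=\left\|I_{s}f\right\|_{L^{p}}\lesssim\sum_{|\alpha|=m}\left\|I_{v}D^{\alpha}f\right\|_{L^{p}}=\sum_{|\alpha|=m}\left\|D^{\alpha}f\right\|_{\dot{H}^{v}_{p}}$, the inequality in the remaining direction. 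One should also check that $\sum_{|\alpha|=m}\left\|D^{\alpha}f\right\|_{\dot{H}^{v}_{p}}$ is indeed a norm: it vanishes only if all order--$m$ derivatives of $f$ vanish, i.e. $f$ is a polynomial, which is the zero element of $\dot{H}^{s}_{p}$.

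The only delicate aspect — which is bookkeeping rather than a genuine obstacle, the result being classical and recorded in \cite{AK21} — is setting up the functional--analytic framework for $\dot{H}^{\sigma}_{p}$ when $p\neq 2$: the symbol manipulations above are first carried out on Schwartz functions, where $I_{s}$, $I_{v}$ and the $T_{\mu_{\alpha}}$ all act unambiguously, and are then transferred to $\dot{H}^{s}_{p}$ by density, using that these operators intertwine consistently on the completion. Verifying the Mikhlin bounds for the $\mu_{\alpha}$ (which are homogeneous of degree $0$ and smooth on the sphere, hence have derivatives homogeneous of degree $-|\beta|$) and keeping track of the factors $i^{\pm m}$ are routine.
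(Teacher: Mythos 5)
The paper does not prove Lemma 2.1 — it is quoted verbatim from \cite{AK21} — so there is no in-paper argument to compare against; judged on its own, your Mikhlin--H\"{o}rmander multiplier argument is the standard proof of this equivalence and is correct: the symbols $\xi^{\alpha}/|\xi|^{m}$ are homogeneous of degree zero and smooth off the origin, the multinomial identity $|\xi|^{2m}=\sum_{|\alpha|=m}\binom{m}{\alpha}\xi^{2\alpha}$ inverts the relation for the lower bound, and the reduction to Schwartz functions modulo polynomials is the usual (and correctly flagged) technicality for $\dot{H}^{\sigma}_{p}$. No gaps.
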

The following lemma is the well-known fractional product rule. See \cite{CW91,T00} for example.
\begin{lem}[Fractional Product Rule]\label{lem 2.2.}
Let $s\ge 0$, $1<r,\;r_{2} ,\;p_{1} <\infty $, $1<r_{1} ,\;p_{2} \le \infty $. Assume that
\[\frac{1}{r} =\frac{1}{r_{i} } +\frac{1}{p_{i} }\;(i=1,\;2).\]
Then we have
\begin{equation} \label{GrindEQ__2_1_}
\left\| fg\right\| _{\dot{H}_{r}^{s} } \lesssim\left\| f\right\| _{r_{1} } \left\| g\right\| _{\dot{H}_{p_{1} }^{s} } +\left\| f\right\| _{\dot{H}_{r_{2} }^{s} } \left\| g\right\| _{p_{2} } .
\end{equation}
\end{lem}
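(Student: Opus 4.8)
The plan is to establish the fractional Leibniz rule by a Littlewood--Paley paraproduct decomposition combined with the square--function characterisation of the Sobolev norm $\|\cdot\|_{\dot H^s_\rho}$ valid for $1<\rho<\infty$. Fix a smooth dyadic partition of unity and let $P_j$ $(j\in\Z)$ denote the Littlewood--Paley projection onto the frequency annulus $|\xi|\sim 2^j$ and $S_j=\sum_{k\le j}P_k$ the projection onto $|\xi|\lesssim 2^j$. Writing $f=\sum_j P_j f$ and $g=\sum_k P_k g$, I would split the product into the three standard paraproducts
\[ fg=\Pi_{\mathrm{lh}}(f,g)+\Pi_{\mathrm{hl}}(f,g)+\Pi_{\mathrm{hh}}(f,g), \]
where $\Pi_{\mathrm{lh}}=\sum_j (S_{j-2}f)(P_j g)$ (low frequency of $f$, high of $g$), $\Pi_{\mathrm{hl}}=\sum_j (P_j f)(S_{j-2}g)$, and $\Pi_{\mathrm{hh}}=\sum_{|j-k|\le 1}(P_j f)(P_k g)$. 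Since the right--hand side of (2.1) is a sum of two symmetric terms, it suffices to bound $\Pi_{\mathrm{lh}}$ and $\Pi_{\mathrm{hh}}$ by $\|f\|_{r_1}\|g\|_{\dot H^s_{p_1}}$; the term $\Pi_{\mathrm{hl}}$ is treated in the same way with the roles of $f$ and $g$ interchanged and the pair $(r_1,p_1)$ replaced by $(r_2,p_2)$, yielding $\|f\|_{\dot H^s_{r_2}}\|g\|_{p_2}$.

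For the low--high paraproduct, each summand $(S_{j-2}f)(P_j g)$ has Fourier support in the annulus $|\xi|\sim 2^j$, so the blocks overlap only finitely often and applying $I_s$ essentially multiplies the $j$-th term by $2^{sj}$. The Littlewood--Paley square--function characterisation of $\dot H^s_r$ (valid since $1<r<\infty$) gives
\[ \|\Pi_{\mathrm{lh}}(f,g)\|_{\dot H^s_r}\lesssim\Big\|\big(\textstyle\sum_j 2^{2sj}|(S_{j-2}f)(P_j g)|^2\big)^{1/2}\Big\|_{L^r}. \]
Using the pointwise bound $|S_{j-2}f|\lesssim Mf$ by the Hardy--Littlewood maximal function $M$, I pull $Mf$ out of the square and apply H\"older with $\tfrac1r=\tfrac1{r_1}+\tfrac1{p_1}$:
\[ \lesssim\|Mf\|_{r_1}\Big\|\big(\textstyle\sum_j 2^{2sj}|P_j g|^2\big)^{1/2}\Big\|_{p_1}\lesssim\|f\|_{r_1}\|g\|_{\dot H^s_{p_1}}, \]
where the last step uses boundedness of $M$ on $L^{r_1}$ $(1<r_1\le\infty)$ and the square--function characterisation for $g$ in $\dot H^s_{p_1}$ $(1<p_1<\infty)$. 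The hypotheses on the exponents are precisely what make the maximal and Littlewood--Paley bounds available.

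The high--high paraproduct is the main obstacle, and it is here that $s\ge 0$ is essential. The difficulty is that $(P_j f)(P_k g)$ with $|j-k|\le1$ can have output frequency anywhere in $|\xi|\lesssim 2^j$, so the derivative weight $2^{s\ell}$ attached to an output block $\ell$ no longer matches the input frequency $2^j$. I would localise the output: for fixed $\ell$ only the terms with $j\ge \ell-c$ contribute, and since $s\ge0$ one has $2^{s\ell}\lesssim 2^{sj}$, which lets me transfer the full derivative onto the high--frequency factor $P_j g$. Summing the resulting geometric series in $j\ge\ell$ (convergent precisely because $s\ge0$) and invoking the square--function characterisation in $\dot H^s_r$, the same maximal--function and H\"older argument as above yields $\|f\|_{r_1}\|g\|_{\dot H^s_{p_1}}$. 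Combining the three paraproduct estimates gives (2.1). The only points where positivity of $s$ and the Lebesgue restrictions are used are the square--function characterisations, the maximal--function bounds, and the geometric summation in the high--high term, all of which are classical; I would cite \cite{CW91,T00} for the sharp forms.
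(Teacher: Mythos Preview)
The paper does not give its own proof of this lemma: it is stated as a known result with the remark ``See \cite{CW91,T00} for example.'' Your paraproduct argument is exactly the standard route found in those references, so in that sense you are aligned with what the paper intends.

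One small imprecision worth flagging: in the high--high term you say the geometric series $\sum_{j\ge\ell}2^{s(\ell-j)}$ is ``convergent precisely because $s\ge0$,'' but at the endpoint $s=0$ this series does not converge. This is harmless because at $s=0$ the inequality is just H\"older, and for $s>0$ your argument goes through as written (with the Fefferman--Stein vector-valued maximal inequality implicit in the step where you pull the maximal function through the square sum, using $1<r<\infty$). You might state the $s=0$ case separately to keep the argument clean.
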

\begin{cor}\label{cor 2.3.}
Let $s\ge 0$, $q\in \N$. Let $1<r,\;r_{k}^{i} <\infty $ for $1\le i,\;k\le q$. Assume that
\[\frac{1}{r} =\sum _{i=1}^{q}\frac{1}{r_{k}^{i} }  , \]
for any $1\le k\le q$. Then we have
\begin{equation} \label{GrindEQ__2_2_}
\left\| \prod _{i=1}^{q}f_{i}  \right\| _{\dot{H}_{r}^{s} } \lesssim\sum _{k=1}^{q}\left(\left\| f_{k} \right\| _{\dot{H}_{r_{k}^{k} }^{s} } \prod _{i\in I_{k} }\left\| f_{i} \right\| _{r_{k}^{i} }  \right) ,
\end{equation}
where $I_{k} =\left\{i\in \N:\;1\le i\le q,\;i\ne k\right\}$.
\end{cor}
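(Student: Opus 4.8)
The plan is to prove Corollary~\ref{cor 2.3.} by induction on the number of factors $q$, the only ingredients being the two--factor fractional product rule (Lemma~\ref{lem 2.2.}) and H\"{o}lder's inequality in Lebesgue spaces.

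The case $q=1$ is trivial, and $q=2$ is exactly Lemma~\ref{lem 2.2.}: taking $f=f_{1}$ and $g=f_{2}$ in \eqref{GrindEQ__2_1_}, the summand $\left\|f_{1}\right\|_{\dot{H}_{r_{1}^{1}}^{s}}\left\|f_{2}\right\|_{r_{1}^{2}}$ is the second term on the right of \eqref{GrindEQ__2_1_} (with $r_{2}=r_{1}^{1}$, $p_{2}=r_{1}^{2}$) and $\left\|f_{1}\right\|_{r_{2}^{1}}\left\|f_{2}\right\|_{\dot{H}_{r_{2}^{2}}^{s}}$ is the first term (with $r_{1}=r_{2}^{1}$, $p_{1}=r_{2}^{2}$), the two required H\"{o}lder relations $\frac{1}{r}=\frac{1}{r_{k}^{1}}+\frac{1}{r_{k}^{2}}$ $(k=1,2)$ being precisely the hypotheses.

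For the inductive step, assume the statement for products of $q-1$ factors, and let $f_{1},\dots,f_{q}$ together with exponents $r_{k}^{i}$ be given. Writing $\prod_{i=1}^{q}f_{i}=\bigl(\prod_{i=1}^{q-1}f_{i}\bigr)\cdot f_{q}$ and applying Lemma~\ref{lem 2.2.} yields
\[\Bigl\|\prod_{i=1}^{q}f_{i}\Bigr\|_{\dot{H}_{r}^{s}}\lesssim\Bigl\|\prod_{i=1}^{q-1}f_{i}\Bigr\|_{\dot{H}_{A}^{s}}\left\|f_{q}\right\|_{B}+\Bigl\|\prod_{i=1}^{q-1}f_{i}\Bigr\|_{C}\left\|f_{q}\right\|_{\dot{H}_{r_{q}^{q}}^{s}},\]
where $\frac{1}{A}+\frac{1}{B}=\frac{1}{r}=\frac{1}{C}+\frac{1}{r_{q}^{q}}$ and, in the second term, $C$ is fixed by $\frac{1}{C}=\sum_{i=1}^{q-1}\frac{1}{r_{q}^{i}}$. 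By H\"{o}lder's inequality $\bigl\|\prod_{i=1}^{q-1}f_{i}\bigr\|_{C}\le\prod_{i=1}^{q-1}\left\|f_{i}\right\|_{r_{q}^{i}}$, so the second term is dominated by the $k=q$ summand of \eqref{GrindEQ__2_2_}. For the first term one applies the induction hypothesis to $\bigl\|\prod_{i=1}^{q-1}f_{i}\bigr\|_{\dot{H}_{A}^{s}}$; choosing $A$ (hence $B$) and the truncated exponent tuples appropriately, this reproduces, after multiplication by $\left\|f_{q}\right\|_{B}$, the sum $\sum_{k=1}^{q-1}\left\|f_{k}\right\|_{\dot{H}_{r_{k}^{k}}^{s}}\prod_{i\ne k}\left\|f_{i}\right\|_{r_{k}^{i}}$, and adding the two contributions gives \eqref{GrindEQ__2_2_}.

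The point that requires care --- and the one I expect to be the real obstacle --- is the bookkeeping of exponents: one must verify that at each use of Lemma~\ref{lem 2.2.} and of the induction hypothesis the auxiliary exponents can be chosen so that all the H\"{o}lder identities hold simultaneously and so that the Lebesgue exponents finally appearing are exactly the prescribed $r_{k}^{i}$. This can be arranged by assigning to each factor $f_{i}$ one common Lebesgue exponent in all summands in which $f_{i}$ does not carry the $\dot{H}^{s}$--norm; the fully general statement is then a multilinear fractional Leibniz inequality of the type treated in \cite{CW91,T00}. No estimate beyond Lemma~\ref{lem 2.2.} and H\"{o}lder's inequality enters.
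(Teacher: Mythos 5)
Your proposal is correct and follows exactly the paper's own (one--line) proof of this corollary: induction on $q$ using Lemma~2.2 and H\"{o}lder's inequality. The exponent bookkeeping you flag at the end is indeed the only subtlety --- the induction closes cleanly precisely when each $f_{i}$ is assigned a common Lebesgue exponent in all summands where it does not carry the $\dot{H}^{s}$--norm (so that $\sum_{i=1}^{q-1}1/r_{k}^{i}$ is independent of $k$ and the induction hypothesis applies with a single intermediate exponent), which is exactly the form in which the corollary is invoked in (3.16) and (3.30); the paper's proof does not address this point at all.
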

\begin{proof} We can easily prove (2.2) by using Lemma 2.2, H\"{o}lder inequality and induction on $q$.
\end{proof}
The following lemma is the well-known fractional chain rule. See \cite{CW91,KPV93} for example.
\begin{lem}[Fractional Chain Rule]\label{lem 2.3.}
Suppose $G\in C^{1} (\C)$ and $s\in \left(0,\;1\right)$. Then, for $1<r,\;r_{2} <\infty $ and $1<r_{1} \le \infty $ satisfying $\frac{1}{r} =\frac{1}{r_{1} } +\frac{1}{r_{2} } $,
\begin{equation} \label{GrindEQ__2_4_}
\left\| G(u)\right\| _{\dot{H}_{r}^{s} } \lesssim\left\| G'(u)\right\| _{r_{1} } \left\|u\right\|_{\dot{H}_{r_{2} }^{s} } .
\end{equation}
\end{lem}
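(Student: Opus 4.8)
The plan is to reduce the fractional-derivative norm to a pointwise difference square function and then to exploit the $C^{1}$ structure of $G$ through the mean value inequality. The starting point is the classical identification $\dot H_{r}^{s}=\dot F_{r,2}^{s}$ (valid for $0<s<1$, $1<r<\infty$) together with the difference (Strichartz--Stein) characterization
\[\left\| f\right\| _{\dot H_{r}^{s}}\approx \left\| \mathcal D_{s} f\right\| _{L^{r}},\qquad \mathcal D_{s} f(x):=\left(\int _{\R^{n}}\frac{\left|f(x)-f(x-y)\right|^{2}}{\left|y\right|^{n+2s}}\,dy\right)^{1/2}.\]
I would first invoke this equivalence, since it turns the nonlocal operator $I_{s}=\mathfrak F^{-1}\left|\xi\right|^{s}\mathfrak F$ into a quantity that interacts well with pointwise nonlinear operations like composition with $G$.

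Next I would apply the characterization to $f=G(u)$. Since $G\in C^{1}(\C)$, the fundamental theorem of calculus along the segment joining $u(x-y)$ and $u(x)$ (reading $G$ as a $C^{1}$ map $\R^{2}\to\R^{2}$, so that its differential has size comparable to $\left|G'\right|$ in the sense of the norm defined in Section 2) yields the pointwise bound
\[\left|G(u(x))-G(u(x-y))\right|\le \left|u(x)-u(x-y)\right|\int_{0}^{1}\left|G'\!\left(u(x-y)+\theta\left(u(x)-u(x-y)\right)\right)\right|d\theta .\]
Substituting this into $\mathcal D_{s}(G(u))$, the task becomes to separate the $G'$-factor, which should contribute $\left\| G'(u)\right\| _{r_{1}}$, from the difference quotient of $u$, which should contribute $\left\| \mathcal D_{s} u\right\| _{L^{r_{2}}}\approx\left\| u\right\| _{\dot H_{r_{2}}^{s}}$; the splitting is then realized by H\"older's inequality with $\frac1r=\frac1{r_{1}}+\frac1{r_{2}}$.

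The main obstacle is precisely this separation, because the integrand involves $G'$ evaluated along the connecting segment---in particular at the translated point $u(x-y)$---rather than only at the base point $x$, and $G$ is assumed merely $C^{1}$ (no convexity or monotonicity of $G'$ is available to compare the segment values with the endpoint values). When $r_{1}=\infty$ there is nothing to do: one bounds the $\theta$-average by $\left\| G'(u)\right\| _{L^{\infty}}$, pulls it outside the square function, and closes with the characterization applied to $u$. For finite $r_{1}$ I would aim for the heuristic domination
\[\mathcal D_{s}(G(u))(x)\lesssim \mathcal M\big(\left|G'(u)\right|\big)(x)\,\mathcal D_{s} u(x),\]
where $\mathcal M$ is the Hardy--Littlewood maximal operator absorbing both the $\theta$-average and the shifted values $\left|G'(u(x-y))\right|$. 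H\"older in the exponents $(r_{1},r_{2})$, the $L^{r_{1}}$-boundedness of $\mathcal M$ (available since $r_{1}>1$), and the square-function characterization for $u$ in $\dot H_{r_{2}}^{s}$ would then give (2.4). Since the $G'$-factor genuinely depends on $y$, this product bound cannot hold literally pointwise; making the maximal-function domination rigorous is the delicate point and is carried out through a vector-valued (Fefferman--Stein) maximal inequality. This is the route of \cite{CW91,KPV93}, whose argument I would follow.
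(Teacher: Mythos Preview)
The paper does not supply a proof of this lemma at all: it is stated as a well-known result with the sentence ``See \cite{CW91,KPV93} for example.'' Your sketch is an accurate outline of the Christ--Weinstein/Kenig--Ponce--Vega argument from those very references---the difference characterization of $\dot H^{s}_{r}$, the mean-value bound on $G(u(x))-G(u(x-y))$, and the Fefferman--Stein vector-valued maximal inequality to handle the $y$-dependence of the $G'$ factor---so there is no discrepancy to report, and your identification of the genuine technical point (that the naive pointwise bound by $\mathcal M(|G'(u)|)\,\mathcal D_{s}u$ fails and must be replaced by the vector-valued argument) is exactly right.
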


Next, we recall some embeddings on Sobolev spaces. See \cite{WHHG11} for example.
\begin{lem}\label{lem 2.5.}
Let $-\infty <s_{2} \le s_{1} <\infty $ and $1<p_{1} \le p_{2} <\infty $ with $s_{1} -\frac{n}{p_{1} } =s_{2} -\frac{n}{p_{2} } $. Then we have the following embeddings:
\[\dot{H}_{p_{1} }^{s_{1} } \subset \dot{H}_{p_{2} }^{s_{2} } ,\; H_{p_{1} }^{s_{1} } \subset H_{p_{2} }^{s_{2} } .\]
\end{lem}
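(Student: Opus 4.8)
The plan is to reduce both claimed embeddings to the mapping properties of the Riesz and Bessel potential operators, whose boundedness is governed by the Hardy--Littlewood--Sobolev (HLS) inequality. I first observe that, by the very definition of the homogeneous norm, proving $\dot{H}_{p_1}^{s_1}\subset\dot{H}_{p_2}^{s_2}$ is equivalent to the operator bound $\|I_{s_2}f\|_{p_2}\lesssim\|I_{s_1}f\|_{p_1}$. Setting $g:=I_{s_1}f$ and $\alpha:=s_1-s_2\ge 0$, the multiplicativity of the fractional powers (the symbols $|\xi|^{a}|\xi|^{b}=|\xi|^{a+b}$ multiply) gives $I_{s_2}f=I_{-\alpha}\,g$, so the claim becomes $\|I_{-\alpha}g\|_{p_2}\lesssim\|g\|_{p_1}$, an estimate for the Riesz potential of order $\alpha$.

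The hypothesis $s_1-\frac{n}{p_1}=s_2-\frac{n}{p_2}$ rewrites as $\alpha=n\left(\frac{1}{p_1}-\frac{1}{p_2}\right)$, that is $\frac{1}{p_2}=\frac{1}{p_1}-\frac{\alpha}{n}$, which is precisely the scaling relation required for HLS. When $p_1=p_2$ we have $\alpha=0$ and the embedding is the trivial identity, so I may assume $p_1<p_2$, whence $\alpha>0$; moreover $\frac{\alpha}{n}=\frac{1}{p_1}-\frac{1}{p_2}<\frac{1}{p_1}<1$ forces $0<\alpha<n$. Since $I_{-\alpha}$ is, up to a dimensional constant, convolution with the Riesz kernel $|x|^{-(n-\alpha)}$, which belongs to the weak Lebesgue space $L^{n/(n-\alpha),\infty}$, the HLS inequality yields $\|I_{-\alpha}g\|_{p_2}\lesssim\|g\|_{p_1}$ for $1<p_1<p_2<\infty$. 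This establishes the homogeneous embedding.

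For the nonhomogeneous embedding I argue in the same way with $J_s$ in place of $I_s$: it suffices to bound the Bessel potential $J_{-\alpha}=\mathfrak{F}^{-1}(1+|\xi|^{2})^{-\alpha/2}\mathfrak{F}$ from $L^{p_1}$ to $L^{p_2}$. Here the natural route is the standard pointwise estimate on the Bessel kernel $G_\alpha$ (with $\widehat{G_\alpha}(\xi)=(1+|\xi|^{2})^{-\alpha/2}$), namely $G_\alpha(x)\lesssim|x|^{-(n-\alpha)}$ for $|x|\le 1$ and $G_\alpha(x)\lesssim e^{-|x|/2}$ for $|x|\ge 1$. Splitting $G_\alpha$ accordingly, the singular local part is dominated by the Riesz kernel and is handled by HLS exactly as above, while the rapidly decaying tail lies in every $L^{r}$ and in particular is treated by Young's inequality with exponent $r=\frac{n}{n-\alpha}$, once more mapping $L^{p_1}$ into $L^{p_2}$. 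Summing the two contributions gives $\|J_{-\alpha}g\|_{p_2}\lesssim\|g\|_{p_1}$, and hence $\|f\|_{H_{p_2}^{s_2}}\lesssim\|f\|_{H_{p_1}^{s_1}}$.

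The only genuinely delicate point is that the scaling relation places us exactly at the endpoint where Young's inequality fails for the singular part of the kernel: the Riesz kernel $|x|^{-(n-\alpha)}$ just misses $L^{n/(n-\alpha)}$, so one is forced to invoke the weak-type (Lorentz space) refinement furnished by HLS rather than a naive convolution estimate. Verifying $0<\alpha<n$ so that HLS applies, and isolating the trivial case $p_1=p_2$, are the remaining bookkeeping steps; everything else is a routine consequence of these potential-theoretic bounds.
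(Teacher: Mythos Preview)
Your argument is correct: reducing the homogeneous embedding to the Hardy--Littlewood--Sobolev inequality for the Riesz potential $I_{-\alpha}$, and handling the nonhomogeneous case via the standard pointwise bounds on the Bessel kernel $G_\alpha$, is the canonical route and all the exponent checks you record are right.

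Note, however, that the paper does not actually prove this lemma. It is stated as a known fact with the sentence ``we recall some embeddings on Sobolev spaces'' and a reference to \cite{WHHG11}; no proof is given in the paper itself. So there is no ``paper's own proof'' to compare against---you have simply supplied a (correct, standard) proof of a result the authors quote from the literature.
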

\begin{lem}\label{lem 2.6.}
Let $-\infty <s<\infty $ and $1<p<\infty $. Then we have
\[H_{p}^{s+\varepsilon } \subset H_{p}^{s}\;(\varepsilon >0),\; H_{p}^{s} =L^{p} \bigcap \dot{H}_{p}^{s}\;(s>0)\]
\end{lem}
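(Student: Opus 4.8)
The plan is to prove both embeddings in Lemma~\ref{lem 2.6.} by reducing them to standard facts about the Bessel potential $J_{s}=\mathfrak{F}^{-1}(1+|\xi|^{2})^{s/2}\mathfrak{F}$ and about $L^{p}$--multipliers. Recall that by definition $\|f\|_{H_{p}^{s}}=\|J_{s}f\|_{L^{p}}$, so every statement about $H_{p}^{s}$ should be transferred, via $J_{s}$, into a statement about boundedness on $L^{p}$ of a Fourier multiplier operator. The central tool throughout will be the Mikhlin--H\"{o}rmander multiplier theorem: if $m(\xi)$ is smooth away from the origin and satisfies $|\partial^{\alpha}m(\xi)|\lesssim|\xi|^{-|\alpha|}$ for all multi-indices $\alpha$ up to order $\lceil n/2\rceil+1$, then $\mathfrak{F}^{-1}m\,\mathfrak{F}$ is bounded on $L^{p}$ for every $1<p<\infty$.

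For the first embedding $H_{p}^{s+\varepsilon}\subset H_{p}^{s}$ with $\varepsilon>0$, I would write, for $f\in H_{p}^{s+\varepsilon}$,
\[
\|f\|_{H_{p}^{s}}=\|J_{s}f\|_{L^{p}}=\bigl\|J_{s}J_{-(s+\varepsilon)}J_{s+\varepsilon}f\bigr\|_{L^{p}}=\bigl\|T_{\varepsilon}\,(J_{s+\varepsilon}f)\bigr\|_{L^{p}},
\]
where $T_{\varepsilon}=\mathfrak{F}^{-1}m_{\varepsilon}\mathfrak{F}$ has multiplier $m_{\varepsilon}(\xi)=(1+|\xi|^{2})^{-\varepsilon/2}$. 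Since $\varepsilon>0$, the function $m_{\varepsilon}$ is smooth on all of $\R^{n}$, bounded, and each derivative decays, so it satisfies the Mikhlin condition; hence $T_{\varepsilon}$ is bounded on $L^{p}$. This yields $\|f\|_{H_{p}^{s}}\lesssim\|J_{s+\varepsilon}f\|_{L^{p}}=\|f\|_{H_{p}^{s+\varepsilon}}$, which is exactly the claimed continuous embedding.

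For the identity $H_{p}^{s}=L^{p}\cap\dot{H}_{p}^{s}$ when $s>0$, I would prove the two inclusions separately, showing in each case that the relevant norms are comparable. The inclusion $H_{p}^{s}\subset L^{p}\cap\dot{H}_{p}^{s}$ amounts to the two bounds $\|f\|_{L^{p}}\lesssim\|f\|_{H_{p}^{s}}$ and $\|I_{s}f\|_{L^{p}}\lesssim\|J_{s}f\|_{L^{p}}$; the first follows because $(1+|\xi|^{2})^{-s/2}$ is a Mikhlin multiplier for $s>0$, and the second because the ratio $|\xi|^{s}(1+|\xi|^{2})^{-s/2}$ is a Mikhlin multiplier (it is bounded and smooth away from the origin with the required derivative decay, the only subtlety being its behaviour near $\xi=0$, where it stays bounded). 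The reverse inclusion $L^{p}\cap\dot{H}_{p}^{s}\subset H_{p}^{s}$ requires $\|J_{s}f\|_{L^{p}}\lesssim\|f\|_{L^{p}}+\|I_{s}f\|_{L^{p}}$; here I would use a smooth partition of frequency space into a neighbourhood of the origin and its complement. On the low-frequency region the multiplier $(1+|\xi|^{2})^{s/2}$ is bounded and smooth, giving control by $\|f\|_{L^{p}}$; on the high-frequency region the multiplier $(1+|\xi|^{2})^{s/2}|\xi|^{-s}$ is a Mikhlin multiplier, giving control by $\|I_{s}f\|_{L^{p}}$.

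The main obstacle I anticipate is verifying the Mikhlin condition uniformly near the origin for those multipliers that mix the homogeneous symbol $|\xi|^{s}$ with the inhomogeneous symbol $(1+|\xi|^{2})^{s/2}$, since $|\xi|^{s}$ is not smooth at $\xi=0$ and its derivatives are singular there. The clean way around this is precisely the low/high frequency decomposition described above: on any region bounded away from the origin the quotient symbols are genuinely smooth and satisfy the scaling derivative bounds, while on a neighbourhood of the origin one only ever needs the inhomogeneous symbol $(1+|\xi|^{2})^{s/2}$, which is smooth there. Once this splitting is in place the remaining derivative estimates are routine Leibniz-rule computations, so I would not belabour them; the essential content is the reduction to the Mikhlin--H\"{o}rmander theorem together with the frequency localization that isolates the non-smoothness of $|\xi|^{s}$ at the origin.
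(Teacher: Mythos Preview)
The paper does not supply a proof of this lemma; it is stated as a known fact with a reference to \cite{WHHG11}. Your argument via the Mikhlin--H\"{o}rmander multiplier theorem is correct and is precisely the standard way such results are established, so there is nothing to correct or compare against.
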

We recall the convexity H\"{o}lder inequality in Sobolev spaces. See Proposition 1.21 of \cite{WHHG11} for example.
\begin{lem}[Convexity H\"{o}lder inequality]\label{lem 2.7.}
 Let $1<p,\;p_{i} <\infty $, $0\le \theta _{i} \le 1$, $s_{i} ,\;s\in \R{\rm ,\; (}i=1,\;\ldots ,\;N{\rm )}$, $\sum _{i=1}^{N}\theta _{i}  =1$, $s=\sum _{i=1}^{N}\theta _{i}  s_{i} $, ${1 \mathord{\left/{\vphantom{1 p}}\right.\kern-\nulldelimiterspace} p} =\sum _{i=1}^{N}{\theta _{i}  \mathord{\left/{\vphantom{\theta _{i}  p_{i} }}\right.\kern-\nulldelimiterspace} p_{i} }  $. Then we have $\bigcap _{i=1}^{N}\dot{H}_{p_{i} }^{s_{i} }  \subset \dot{H}_{p}^{s} $ and for any $v\in \bigcap _{i=1}^{N}\dot{H}_{p_{i} }^{s_{i} }  $,
\begin{equation} \label{GrindEQ__2_4_}
\left\| v\right\| _{\dot{H}_{p}^{s} } \le \prod _{i=1}^{N}\left\| v\right\| _{\dot{H}_{p_{i} }^{s_{i} } }^{\theta _{i} }  .
\end{equation}
\end{lem}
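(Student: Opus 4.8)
The plan is to reduce the $N$-fold inequality to the two-factor case $N=2$ by induction on $N$, and to prove the two-factor case by complex interpolation. For the induction we may assume every $\theta_i\in(0,1)$, since any index with $\theta_i=0$ is dropped and $\theta_N=1$ forces $p=p_N$, $s=s_N$ (trivial). Put $\mu_i=\theta_i/(1-\theta_N)$ for $1\le i\le N-1$, so $\sum_{i=1}^{N-1}\mu_i=1$, and define the intermediate data $t=\sum_{i=1}^{N-1}\mu_i s_i$ and $1/q=\sum_{i=1}^{N-1}\mu_i/p_i$. Since $1/q$ is a convex combination of the $1/p_i\in(0,1)$ we get $1<q<\infty$, and a direct check gives $s=(1-\theta_N)t+\theta_N s_N$ and $1/p=(1-\theta_N)/q+\theta_N/p_N$. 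Applying the two-factor inequality to the couple $(\dot H_q^t,\dot H_{p_N}^{s_N})$ and the induction hypothesis to $\dot H_q^t$ yields
\[
\|v\|_{\dot H_p^s}\le\|v\|_{\dot H_q^t}^{1-\theta_N}\|v\|_{\dot H_{p_N}^{s_N}}^{\theta_N}\le\Big(\prod_{i=1}^{N-1}\|v\|_{\dot H_{p_i}^{s_i}}^{\mu_i}\Big)^{1-\theta_N}\|v\|_{\dot H_{p_N}^{s_N}}^{\theta_N},
\]
and $\mu_i(1-\theta_N)=\theta_i$ closes the induction. The inclusion $\bigcap_i\dot H_{p_i}^{s_i}\subset\dot H_p^s$ then follows immediately, as the right-hand side is finite whenever $v$ lies in the intersection.

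It remains to treat $N=2$; write $\theta=\theta_1$, so $s=(1-\theta)s_0+\theta s_1$ and $1/p=(1-\theta)/p_0+\theta/p_1$, and assume $0<\|v\|_{\dot H_{p_j}^{s_j}}<\infty$ (the degenerate cases being trivial). When $s_0=s_1=:\sigma$ the statement is elementary: applying $I_\sigma$ turns it into the generalized H\"older / log--convexity inequality $\|h\|_{L^p}\le\|h\|_{L^{p_0}}^{1-\theta}\|h\|_{L^{p_1}}^{\theta}$ for the \emph{single} function $h=I_\sigma v$, valid with constant one. The substance is therefore the different--smoothness case, where I would invoke the complex interpolation identity for Riesz potential spaces,
\[
[\dot H_{p_0}^{s_0},\,\dot H_{p_1}^{s_1}]_{\theta}=\dot H_p^{s},
\]
together with the fundamental convexity property of the complex interpolation functor, namely $\|v\|_{[A_0,A_1]_\theta}\le\|v\|_{A_0}^{1-\theta}\|v\|_{A_1}^{\theta}$ for all $v\in A_0\cap A_1$. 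The latter is obtained by testing the definition of the interpolation norm against the admissible analytic family $f(z)=e^{\varepsilon(z^2-\theta^2)}\lambda^{\,z-\theta}v$ with $\lambda=\|v\|_{A_0}/\|v\|_{A_1}$ and letting $\varepsilon\downarrow0$: on the lines $\Re z=0$ and $\Re z=1$ the modulus of $f$ is bounded by $\|v\|_{A_0}^{1-\theta}\|v\|_{A_1}^{\theta}$, which is exactly the desired product bound.

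The main obstacle is the identification $[\dot H_{p_0}^{s_0},\dot H_{p_1}^{s_1}]_\theta=\dot H_p^s$, where the real analysis happens. To realize $\dot H_p^s\hookrightarrow[\dot H_{p_0}^{s_0},\dot H_{p_1}^{s_1}]_\theta$ I would build, for a given $v$, an admissible analytic family whose trace at $z=\theta$ is $v$, namely $F(z)=I_{-s(z)}\big(|I_s v|^{\,p/p(z)}\cdot(I_s v/|I_s v|)\big)$ with $s(z)=(1-z)s_0+zs_1$ and $1/p(z)=(1-z)/p_0+z/p_1$; the modulus--power accounts for the motion along the $L^p$ scale while $I_{-s(z)}$ supplies the smoothing. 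On the boundary lines the operator $I_{s_j-s(z)}$ reduces to the imaginary--order Riesz potential $I_{i\tau}=\mathfrak{F}^{-1}|\xi|^{i\tau}\mathfrak{F}$, and the crux is its $L^{p_j}$--boundedness with operator norm growing at most polynomially in $\tau$, which follows from the Mikhlin--H\"ormander multiplier theorem. This subexponential growth is admissible for the three--lines lemma, so the estimate closes; the sharp constant one is then secured through the abstract convexity property above once the norm identification is taken as an equality, the only mildly delicate point being exact versus equivalent norms, which is immaterial for the $\lesssim$ estimates used later in the paper. I would organize the write--up as (i) the induction reduction, (ii) the elementary equal--smoothness case, and (iii) the complex--interpolation identity with the imaginary--power bound as its engine.
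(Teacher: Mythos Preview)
The paper does not prove this lemma at all; it is quoted as a known result with a pointer to Proposition~1.21 of \cite{WHHG11}. Your proposal therefore supplies an argument where the paper supplies none, and the route you chose---induction on $N$ to reduce to the two-term case, then complex interpolation for the pair $(\dot H_{p_0}^{s_0},\dot H_{p_1}^{s_1})$---is the standard one and is correct.

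One remark on the point you yourself flag. The abstract convexity bound $\|v\|_{[A_0,A_1]_\theta}\le\|v\|_{A_0}^{1-\theta}\|v\|_{A_1}^{\theta}$ has constant exactly $1$, but to conclude you also need $\|v\|_{\dot H_p^s}\le\|v\|_{[\dot H_{p_0}^{s_0},\dot H_{p_1}^{s_1}]_\theta}$ with constant $1$. In your construction of the admissible family $F(z)$ the boundary estimates pass through the imaginary Riesz potentials $I_{i\tau}$, whose $L^{p_j}$ operator norms (via Mikhlin--H\"ormander) are bounded but not equal to $1$; so the argument as written yields only $\|v\|_{\dot H_p^s}\lesssim\prod_i\|v\|_{\dot H_{p_i}^{s_i}}^{\theta_i}$. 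You already note that this is immaterial for the paper's applications (Lemma~3.1 uses the inequality inside chains of $\lesssim$ estimates), and that is exactly right. If you wanted the sharp constant $1$ you would have to argue differently---for instance by a direct three-lines argument applied to a suitable pairing, arranging the analytic family so that the imaginary shifts cancel against a dual family---but there is no need to do so here.
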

\begin{defn}\label{defn 2.8.}
\textnormal{A pair $(\gamma(p),\;p)$ is said to be \textbf{Schr\"{o}dinger admissible} if
\begin{equation} \label{GrindEQ__2_5_}
\left\{\begin{array}{l} {2\le p\le \frac{2n}{n-2} ,\;n\ge 3,} \\ {2\le p<\infty ,\;n=2,} \\ {2\le p\le \infty ,\;n=1,} \end{array}\right.
\end{equation}
and
\begin{equation} \label{GrindEQ__2_6_}
\frac{2}{\gamma (p)} =\frac{n}{2} -\frac{n}{p} .
\end{equation}
For a given interval $I$, we also define the following \textbf{Strichartz norms}:
\begin{equation} \label{GrindEQ__2_7_}
\left\| u\right\| _{S\left(I,\;\dot{H}^{s} \right)}={\mathop{\sup }\limits_{\left(\gamma \left(r\right),\;r\right)\in A}} \left\| u\right\| _{L^{\gamma \left(r\right)} \left(I,\;\dot{H}_{r}^{s} \right)} ,\;\left\| u\right\| _{S\left(I,\;H^{s} \right)}={\mathop{\sup }\limits_{\left(\gamma \left(r\right),\;r\right)\in A}} \left\| u\right\| _{L^{\gamma \left(r\right)} \left(I,\;H_{r}^{s} \right)} ,
\end{equation}
and the \textbf{dual Strichartz norms}:
\begin{equation} \label{GrindEQ__2_8_}
\left\| u\right\| _{S'\left(I,\;\dot{H}^{s} \right)}={\mathop{\inf }\limits_{\left(\gamma \left(r\right),\;r\right)\in A}} \left\| u\right\| _{L^{\gamma \left(r\right)^{{'} } } \left(I,\;\dot{H}_{r'}^{s} \right)} ,\;\left\| u\right\| _{S'\left(I,\;H^{s} \right)}={\mathop{\inf }\limits_{\left(\gamma \left(r\right),\;r\right)\in A}} \left\| u\right\| _{L^{\gamma \left(r\right)^{{'} } } \left(I,\;H_{r'}^{s} \right)} ,
\end{equation}
where $s\in \R$ and $A=\left\{\left(\gamma \left(r\right),\;r\right);\;\left(\gamma \left(r\right),\;r\right)\;\textnormal{is admissible}\right\}$. We shall abbreviate $S\left(\R,\;X\right)$ and $S'\left(\R,\;X\right)$ as $S\left(X\right)$ and $S'\left(X\right)$, respectively, where $X$ is $\dot{H}^{s}$ or $H^{s} $. We write $S\left(L^{2} \right)$ and $S'\left(L^{2} \right)$ instead of $S(\dot{H}^{0})$ and $S'(\dot{H}^{0})$, respectively.}
\end{defn}
We end this section with recalling the well-known Strichartz estimates which are foundational tool to obtain the well--posedness results. See \cite{LP15,WHHG11} for instance.

\begin{lem}[Strichartz estimates]\label{lem 2.9.}
Let $S(t)=e^{it\Delta } $ and $s\in \R$. Then we have
\begin{equation} \label{GrindEQ__2_9_}
\left\| S(t)\phi \right\| _{S(\dot{H}^{s})} \lesssim\left\| \phi \right\| _{\dot{H}^{s} } ,
\end{equation}
\begin{equation} \label{GrindEQ__2_10_}
\left\| \int _{0}^{t}S(t-\tau )f(\tau )d\tau  \right\| _{S(\dot{H}^{s})} \lesssim\left\| f\right\| _{S'(\dot{H}^{s})} .
\end{equation}
\end{lem}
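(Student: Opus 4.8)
The plan is to reduce both estimates to the case $s=0$ and then lift to general $s\in\R$ by exploiting the fact that the Fourier multiplier $I_{s}=\mathfrak{F}^{-1}\left|\xi\right|^{s}\mathfrak{F}$ commutes with the free propagator $S(t)=e^{it\Delta}$. The whole argument rests on two classical ingredients: the pointwise dispersive decay estimate for $S(t)$ and the abstract Keel--Tao machinery, both of which are available in the cited literature.

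First I would establish the dispersive estimate. From the explicit convolution kernel of $e^{it\Delta}$ one obtains $\left\|S(t)\phi\right\|_{L^{\infty}}\lesssim\left|t\right|^{-n/2}\left\|\phi\right\|_{L^{1}}$, while Plancherel's theorem shows $S(t)$ is unitary on $L^{2}$, so $\left\|S(t)\phi\right\|_{L^{2}}=\left\|\phi\right\|_{L^{2}}$. Interpolating between these two bounds yields the decay estimate $\left\|S(t)\phi\right\|_{L^{p}}\lesssim\left|t\right|^{-n(1/2-1/p)}\left\|\phi\right\|_{L^{p'}}$ for $2\le p\le\infty$. Together with the $L^{2}$ energy bound, this is precisely the input required by the abstract theorem of Keel and Tao with decay exponent $n/2$. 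Feeding these two hypotheses into that theorem produces the $s=0$ homogeneous estimate $\left\|S(t)\phi\right\|_{S(L^{2})}\lesssim\left\|\phi\right\|_{L^{2}}$ and, via the associated $TT^{*}$ duality argument, the inhomogeneous estimate $\left\|\int_{0}^{t}S(t-\tau)f(\tau)\,d\tau\right\|_{S(L^{2})}\lesssim\left\|f\right\|_{S'(L^{2})}$, where the supremum and infimum in the Strichartz norms range over the admissible pairs of Definition 2.8.

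To pass from $s=0$ to general $s$, I would use that $I_{s}$ is a Fourier multiplier and therefore commutes with $S(t)$, so that $I_{s}S(t)\phi=S(t)I_{s}\phi$ and, by the very definition $\left\|\cdot\right\|_{\dot{H}_{r}^{s}}=\left\|I_{s}\cdot\right\|_{L^{r}}$,
\[\left\|S(t)\phi\right\|_{S(\dot{H}^{s})}=\left\|S(t)I_{s}\phi\right\|_{S(L^{2})}\lesssim\left\|I_{s}\phi\right\|_{L^{2}}=\left\|\phi\right\|_{\dot{H}^{s}}.\]
Applying the same commutation under the time integral and invoking the $s=0$ inhomogeneous estimate with $I_{s}f$ in place of $f$ then gives (2.10). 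These two steps are purely formal once the $s=0$ estimates are in hand.

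The main obstacle is the endpoint. Since the admissible range in Definition 2.8 includes $p=\frac{2n}{n-2}$, hence $\gamma(p)=2$, when $n\ge 3$, the soft route through $TT^{*}$ combined with the Hardy--Littlewood--Sobolev inequality — which only reaches the non-endpoint pairs $\gamma(p)>2$ — is insufficient, and one genuinely needs the full strength of the Keel--Tao theorem, whose proof rests on a dyadic decomposition of the time interval and a delicate bilinear summation. For the non-endpoint pairs the softer argument already suffices, so the difficulty is concentrated entirely at the single endpoint exponent; once that endpoint is granted from the cited references, the remaining reductions are routine.
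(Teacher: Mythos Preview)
Your outline is correct and is exactly the standard route to the Strichartz estimates: dispersive decay plus $L^{2}$ unitarity fed into the Keel--Tao theorem for $s=0$, followed by the commutation $I_{s}S(t)=S(t)I_{s}$ to reach general $s$. You are also right that the endpoint $(\gamma,p)=(2,\tfrac{2n}{n-2})$ for $n\ge 3$ is included in Definition~2.8, so the full Keel--Tao result (and not just the softer $TT^{*}$/Hardy--Littlewood--Sobolev argument) is required.

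The paper itself, however, does not prove this lemma at all: it simply records the statement and cites \cite{LP15,WHHG11} for the proof. So there is no ``paper's own proof'' to compare against; your proposal is a faithful sketch of what one finds in those references.
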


\section{Nonlinear estimates}

First, we establish the following important nonlinear estimates in Sobolev spaces.

\begin{lem}\label{lem 3.1.}
Let $1<p,\;r<\infty $, $0\le \delta \le \min \left\{s_{0} ,\;s\right\}<\infty, \;\sigma>0$ and $\left\lceil s-\delta \right\rceil \le\sigma+1$. Assume that $f\in C^{\left\lceil s-\delta \right\rceil } $ satisfies the following condition:
\begin{equation} \label{GrindEQ__3_1_}
\left|f^{\left(k\right)} \left(z\right)\right|\lesssim\left|z\right|^{\sigma +1-k} ,
\end{equation}
for any $0\le k\le \left\lceil s-\delta \right\rceil $ and $z\in \C$. Suppose that
\begin{equation} \label{GrindEQ__3_2_}
\frac{1}{p} =\sigma \left(\frac{1}{r} -\frac{s_{0} }{n} \right)+\frac{1}{r} -\frac{\delta }{n} ,\; \frac{1}{r} -\frac{s_{0} }{n} >0.
\end{equation}
Then we have
\begin{equation} \label{GrindEQ__3_3_}
\left\| f\left(u\right)\right\| _{\dot{H}_{p}^{s-\delta } } \lesssim\left\| u\right\| _{\dot{H}_{r}^{s_{0} } }^{\sigma} \left\| u\right\| _{\dot{H}_{r}^{s} } .
\end{equation}
\end{lem}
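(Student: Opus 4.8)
The plan is to reduce the estimate on the inhomogeneous Sobolev norm $\dot H_p^{s-\delta}$ to a product of Lebesgue norms and one $\dot H_r^{s}$ norm, handling the fractional part of $s-\delta$ via the fractional chain and product rules and the integer part via Lemma 2.1. First I would split $s-\delta$ as $[s-\delta] + v$ with $v = (s-\delta) - [s-\delta] \in [0,1)$; when $v=0$ one argues directly, and when $v>0$ one works with $\dot H_p^{v}$ of $[s-\delta]$-th order derivatives of $f(u)$, invoked through Lemma 2.1 (which applies since $p\in(1,\infty)$). The derivative $D^\alpha\big(f(u)\big)$ with $|\alpha|=[s-\delta]$ expands, by the Faà di Bruno / chain rule under the $\C = \R^2$ identification, into a finite sum of terms of the form $f^{(k)}(u)\,\prod_{j=1}^{k} D^{\beta_j} u$ where $k\le [s-\delta]$ and $\beta_1+\cdots+\beta_k = \alpha$, together with an extra term $f'(u) D^\alpha u$ (the $k=1$ term carrying the top-order derivative).

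Next I would estimate each such term. The idea is to put the factor that still carries a derivative (either $D^\alpha u$ in the chain-rule top term, or the full collection, via the fractional chain/product rule) into $\dot H_r^{s}$ and everything else into Lebesgue spaces, then match exponents. For the leading term, apply the fractional chain rule (Lemma 2.4) to write $\|f'(u)D^\alpha u\|_{\dot H_p^{v}}$ — actually more cleanly, use the fractional chain rule directly on $f(u)$ when $[s-\delta]=0$, and the product rule (Lemma 2.3/Corollary 2.4) on the expanded terms when $[s-\delta]\ge 1$: one gets $\|f^{(k)}(u)\|_{L^{q_0}}\prod_j\|D^{\beta_j}u\|_{L^{q_j}}$-type bounds with one factor replaced by a $\dot H^{v}_{q_j}$ norm. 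Using the growth hypothesis \eqref{GrindEQ__3_1_}, $\|f^{(k)}(u)\|_{L^{q_0}} \lesssim \|\,|u|^{\sigma+1-k}\,\|_{L^{q_0}} = \|u\|_{L^{(\sigma+1-k)q_0}}^{\sigma+1-k}$. Then Sobolev embedding (Lemma 2.6) turns the various $\dot H^{v}_{q_j}$ and $L^{(\sigma+1-k)q_0}$ norms into $\dot H_r^{s_0}$ norms — here the constraint $\frac1r - \frac{s_0}{n}>0$ guarantees $\dot H_r^{s_0}\subset L^{m}$ for the relevant $m$, and the top-derivative factor is absorbed into $\dot H_r^{s}$ by observing $D^{\beta}u$ with the leftover $v$-smoothness lands in $\dot H_r^{s}$ when $|\beta|+v$ matches (using that $s_0$ contributes the "spare" smoothness for the other $k-1$ factors). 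The scaling relation \eqref{GrindEQ__3_2_} is exactly what makes all the Hölder exponents $q_0,q_1,\dots,q_k$ consistent: counting, $\sigma$ factors of the type $\dot H_r^{s_0}\subset L^{*}$ contribute $\sigma(\tfrac1r-\tfrac{s_0}{n})$ to $\tfrac1p$, one factor contributes the remaining smoothness, and the $\delta$ shift accounts for passing from $s$ down to $s-\delta$.

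The main obstacle I expect is the bookkeeping of Hölder exponents in the multi-term chain-rule expansion: one must choose, for each term $f^{(k)}(u)\prod_j D^{\beta_j}u$, an admissible splitting $\tfrac1p = \tfrac1{q_0}+\sum_j \tfrac1{q_j}$ with each $q_j\in(1,\infty)$ so that Sobolev embedding into $\dot H_r^{s_0}$ (or $\dot H_r^{s}$ for the distinguished factor) is valid, and verify this is possible precisely under \eqref{GrindEQ__3_2_} together with $\lceil s-\delta\rceil\le\sigma+1$ (the latter ensures $f^{(k)}$ is genuinely available, i.e. $\sigma+1-k\ge 0$, and that enough derivatives can be distributed). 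A secondary technical point is the case $v=0$ with $[s-\delta]\ge 1$, where one cannot use the fractional product rule and must instead use the ordinary Leibniz rule plus Hölder directly; and the borderline use of $L^\infty$ endpoints in Lemmas 2.3–2.4, which is permitted by their hypotheses. Once the exponents are pinned down, each piece is a routine application of Hölder, the embeddings of Section 2, and \eqref{GrindEQ__3_1_}, and summing the finitely many terms yields \eqref{GrindEQ__3_3_}.
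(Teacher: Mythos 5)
Your skeleton matches the paper's: reduce to the fractional case, use Lemma 2.1 to pass to $\|D^{\alpha}f(u)\|_{\dot H_p^{v}}$ with $v=s-\delta-[s-\delta]$, expand by Fa\`a di Bruno into terms $f^{(q)}(u)\prod_{i}D^{\alpha_i}u$, and close with the fractional product/chain rules, H\"older, and Sobolev embedding. However, there is a genuine gap in the step you defer as ``bookkeeping'': your scheme assigns one distinguished factor to $\dot H_r^{s}$ and every other derivative factor $D^{\alpha_i}u$ to a Lebesgue space controlled by $\|u\|_{\dot H_r^{s_0}}$ via $\dot H_r^{s_0}\subset \dot H_{a_i}^{|\alpha_i|}$. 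That embedding requires $|\alpha_i|\le s_0$, which holds only when $s\le s_0+1$ (then $|\alpha_i|\le [s-\delta]-1\le s-1\le s_0$ for the non-distinguished factors). The lemma's hypotheses allow $s>s_0+1$ (only $\delta\le\min\{s_0,s\}$ is assumed), e.g.\ $s=5.5$, $s_0=1.2$, $\delta=0$: in the term with $q=2$ and $|\alpha_1|=3$, $|\alpha_2|=2$, the second factor has $|\alpha_2|=2>s_0$ and cannot be placed in any Lebesgue space using $\|u\|_{\dot H_r^{s_0}}$ alone, so the exponents cannot be matched under \eqref{GrindEQ__3_2_} by your assignment. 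The paper resolves this with a separate case $s>s_0+1$: it chooses weights $\beta_i$ with $0\le\beta_i\le|\alpha_i|$, $s_0+\beta_i\ge|\alpha_i|$ and $\sum_{i=0}^{q}\beta_i=s-s_0$, puts each factor into $\dot H_r^{s_0+\beta_i}$, and then uses the convexity H\"older inequality (Lemma 2.7) to interpolate $\prod_i\|u\|_{\dot H_r^{s_0+\beta_i}}\lesssim\|u\|_{\dot H_r^{s_0}}^{q}\|u\|_{\dot H_r^{s}}$. Without this interpolation device your argument only proves the lemma under the additional restriction $s\le s_0+1$.

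A secondary omission: in the fractional product rule applied to $f^{(q)}(u)\cdot\prod_i D^{\alpha_i}u$, one of the two resulting terms carries $\|f^{(q)}(u)\|_{\dot H^{v}_{p_1}}$, which must itself be handled by another application of the fractional chain rule to $f^{(q)}$; this uses $f^{(q+1)}$ and hence the hypothesis $f\in C^{\lceil s-\delta\rceil}$ together with $q\le[s-\delta]=\lceil s-\delta\rceil-1$ and $q\le\sigma$. Your write-up only records the Lebesgue-norm term $\|f^{(k)}(u)\|_{L^{q_0}}$. The integer case $s-\delta\in\N$ need not be reargued: the paper simply cites Lemma 2.1 of Wang's paper for it.
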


\begin{proof}
 If $s-\delta \in \N$, the proof can be found in Lemma 2.1 of \cite{W04}. Thus it suffices to consider the case $s-\delta \notin \N$. We use the argument similar to that used in the proof of Lemma 2.1 of \cite{W04}.
First, we consider the case $s-\delta <1$. Putting
\[\frac{1}{p_{1} }:=\sigma \left(\frac{1}{r} -\frac{s_{0} }{n} \right), \; \frac{1}{p_{2} } :=\frac{1}{r} -\frac{\delta }{n} ,\]
and using Lemma 2.5, we have $\dot{H}_{r}^{s_{0} } \subset L^{\frac{p_{1}}{\sigma}}$, $\dot{H}_{r}^{s} \subset \dot{H}_{p_{2} }^{s-\delta } $. Hence, using Lemma 2.4 (fractional chain rule), (3.1), (3.2) and H\"{o}lder inequality , we have
\[\left\| f\left(u\right)\right\| _{\dot{H}_{p}^{s-\delta } } \lesssim\left\| f'\left(u\right)\right\| _{p_{1} } \left\| u\right\| _{\dot{H}_{p_{2} }^{s-\delta } } \lesssim\left\| u\right\| _{\frac{p_{1}}{\sigma}}^{\sigma } \left\| u\right\| _{\dot{H}_{p_{2} }^{s-\delta } } \lesssim\left\| u\right\| _{\dot{H}_{r}^{s_{0} } }^{\sigma } \left\| u\right\| _{\dot{H}_{r}^{s} } .\]
Next, we consider the case $s-\delta >1$. By Lemma 2.1, we have
\[\left\| f\left(u\right)\right\| _{\dot{H}_{p}^{s-\delta } } \lesssim\sum _{\left|\alpha \right|=\left[s-\delta \right]}\left\| D^{\alpha } f\left(u\right)\right\| _{\dot{H}_{p}^{v} }  ,\]
where $v=s-\delta -\left[s-\delta \right]$. Without loss of generality and for simplicity, we assume that $f$ is a function of a real variable. It follows from the Leibniz rule of derivatives that
\begin{equation} \label{GrindEQ__3_4_}
D^{\alpha } f\left(u\right)=\sum _{q=1}^{\left|\alpha \right|}\sum _{\Lambda _{\alpha }^{q} }C_{\alpha ,\;q} f^{\left(q\right)} \left(u\right)\prod _{i=1}^{q}D^{\alpha _{i} } u   ,
\end{equation}
where $\Lambda _{\alpha }^{q} =\left(\alpha _{1} +\cdots +\alpha _{q} =\alpha ,\;\left|\alpha _{i} \right|\ge 1\right)$. Hence it suffices to show that
\begin{equation} \label{GrindEQ__3_5_}
A\equiv \left\| f^{\left(q\right)} \left(u\right)\prod _{i=1}^{q}D^{\alpha _{i} } u \right\| _{\dot{H}_{p}^{v} } \lesssim\left\| u\right\| _{\dot{H}_{r}^{s_{0} } }^{\sigma } \left\| u\right\| _{\dot{H}_{r}^{s} } ,
\end{equation}
where $\left[s-\delta \right]\ge q\ge 1$, $\left|\alpha _{1} \right|+\cdots +\left|\alpha _{q} \right|=\left[s-\delta \right]$, $\left|\alpha _{i} \right|\ge 1$ and $v=s-\delta -\left[s-\delta \right]$.

\noindent We divide the proof of (3.5) in two cases: $s\le s_{0} +1$ and $s>s_{0} +1$.

\textbf{Case 1}: We consider the case $s\le s_{0} +1$. Put
\begin{equation} \label{GrindEQ__3_6_}
\frac{1}{a}:=\frac{1}{r} -\frac{s_{0} }{n} ,\; \frac{1}{b}:=\frac{1}{r} -\frac{s_{0} -v}{n} ,\; \frac{1}{\tilde{a}_{q} }:=\frac{1}{r} -\frac{s-\left|\alpha _{q} \right|}{n} ,
\end{equation}
\begin{equation} \label{GrindEQ__3_7_}
\frac{1}{a_{i} }:=\frac{1}{r} -\frac{s_{0} -\left|\alpha _{i} \right|}{n} ,\; \frac{1}{b_{i} } :=\frac{1}{r} -\frac{s-\left|\alpha _{i} \right|-v}{n} ,\; i=1,\;\ldots ,\;q.
\end{equation}
We can see that $v=s-\delta -\left[s-\delta \right]\le s-\delta -1\le s_{0} $, which implies that $b>0$ and $\dot{H}_{r}^{s_{0} } \subset \dot{H}_{b}^{v}$. We can also see that $s-\left|\alpha _{i} \right|-v\ge s-\left[s-\delta \right]-v=\delta \ge 0$ and $s-\left|\alpha _{i} \right|-v\le s-1\le s_{0} $, which imply that $b_{i} >0$ and $\dot{H}_{r}^{s}\subset \dot{H}_{b_{i} }^{\left|\alpha _{i} \right|+v} $. We also have $\dot{H}_{r}^{s} \subset \dot{H}_{\tilde{a}_{q} }^{\left|\alpha _{q} \right|} $, since $s\ge \left|\alpha _{i} \right|$. If $q\ge 2$, then we can verify that $\left|\alpha _{i} \right|\le \left[s-\delta \right]-1\le s-1\le s_{0} $, which implies that $a_{i} >0$ and $\dot{H}_{r}^{s_{0} } \subset \dot{H}_{a_{i} }^{\left|\alpha _{i} \right|}$. For $1\le k\le q$, we have
\begin{equation} \label{GrindEQ__3_8_}
\frac{1}{p} =\frac{\sigma -q}{a} +\frac{1}{b} +\frac{1}{\tilde{a}_{q} } +\sum _{i\in I_{{}_{q} } }\frac{1}{a_{i} }  =\frac{\sigma -q+1}{a} +\frac{1}{b_{k} } +\sum _{i\in I_{k} }\frac{1}{a_{i} }  ,
\end{equation}
where $I_{k} =\left\{i\in \N:\;1\le i\le q,\;i\ne k\right\}$. Using (3.8) and Lemma 2.2 (fractional product rule), we have
\begin{equation} \label{GrindEQ__3_9_}
A\lesssim\left\| f^{\left(q\right)} \left(u\right)\right\| _{\dot{H}_{p_{1} }^{v} } \left\| \prod _{i=1}^{q}D^{\alpha _{i} } u \right\| _{r_{1} } +\left\| f^{\left(q\right)} \left(u\right)\right\| _{p_{2} } \left\| \prod _{i=1}^{q}D^{\alpha _{i} } u \right\| _{\dot{H}_{r_{2} }^{v} } \equiv A_{1} +A_{2} ,
\end{equation}
where
\begin{equation} \label{GrindEQ__3_10_}
\frac{1}{p_{1} }:=\frac{\sigma -q}{a} +\frac{1}{b} ,\; \frac{1}{r_{1} }:=\frac{1}{\tilde{a}_{q} } +\sum _{i\in I_{{}_{q} } }\frac{1}{a_{i} }  ,
\end{equation}
\begin{equation} \label{GrindEQ__3_11_}
\frac{1}{p_{2} }:=\frac{\sigma -q+1}{a} ,\; \frac{1}{r_{2} }:=\frac{1}{b_{k} } +\sum _{i\in I_{k} }\frac{1}{a_{i} }  .
\end{equation}
First, we estimate $A_{1} $. Since $q\le\sigma,$ it follows from Lemma 2.4 (fractional chain rule) and (3.1) that
\begin{eqnarray}\begin{split} \label{GrindEQ__3_12_}
\left\| f^{\left(q\right)} \left(u\right)\right\| _{\dot{H}_{p_{1} }^{v} } &\lesssim \left\| f^{\left(q+1\right)} \left(u\right)\right\| _{\frac{a}{\sigma -q} } \left\| u\right\| _{\dot{H}_{b}^{v} } \lesssim \left\| \left|u\right|^{\sigma -q} \right\| _{\frac{a}{\sigma -q} } \left\| u\right\| _{\dot{H}_{b}^{v} } \\
&=\left\| u\right\| _{a}^{\sigma -q} \left\| u\right\| _{\dot{H}_{b}^{v} } \lesssim \left\| u\right\| _{\dot{H}_{r}^{s_{0} } }^{\sigma -q+1},
\end{split}\end{eqnarray}
where we assume $\frac{a}{0}=\infty$ and the last inequality follows from the embeddings: $\dot{H}_{r}^{s_{0} } \subset L^{a},\;\dot{H}_{r}^{s_{0} } \subset \dot{H}_{b}^{v}$. Using H\"{o}lder inequality, we also have
\begin{equation} \label{GrindEQ__3_13_}
\left\| \prod _{i=1}^{q}D^{\alpha _{i} } u \right\| _{r_{1} } \le \left\| D^{\alpha _{q} } u\right\| _{\tilde{a}_{q} } \prod _{i\in I_{q} }\left\| D^{\alpha _{i} } u\right\| _{a_{i} }  \lesssim\left\| u\right\| _{\dot{H}_{\tilde{a}_{q} }^{\left|\alpha _{q} \right|} } \prod _{i\in I_{q} }\left\| u\right\| _{\dot{H}_{a_{i} }^{\left|\alpha _{i} \right|} }  \lesssim\left\| u\right\| _{\dot{H}_{r}^{s_{0} } }^{q-1} \left\| u\right\| _{\dot{H}_{r}^{s} } .
\end{equation}
In view of (3.12) and (3.13), we have
\begin{equation} \label{GrindEQ__3_14_}
A_{1} =\left\| f^{\left(q\right)} \left(u\right)\right\| _{\dot{H}_{p_{1} }^{v} } \left\| \prod _{i=1}^{q}D^{\alpha _{i} } u \right\| _{r_{1} } \lesssim\left\| u\right\| _{\dot{H}_{r}^{s_{0} } }^{\sigma } \left\| u\right\| _{\dot{H}_{r}^{s} } .
\end{equation}
Next, we estimate $A_{2} $. We can easily see that
\begin{equation} \label{GrindEQ__3_15_}
\left\| f^{\left(q\right)} \left(u\right)\right\| _{p_{2} } \lesssim \left\| \left|u\right|^{\sigma +1-q} \right\| _{p_{2} } =\left\| u\right\| _{a}^{\sigma +1-q} \lesssim \left\| u\right\| _{\dot{H}_{r}^{s_{0} } }^{\sigma +1-q} .
\end{equation}
Using (3.11) and Corollary 2.3, we have
\begin{eqnarray}\begin{split} \label{GrindEQ__3_16_}
\left\| \prod _{i=1}^{q}D^{\alpha _{i} } u \right\| _{\dot{H}_{r_{2} }^{v} } &\lesssim\sum _{k=1}^{q}\left(\left\| D^{\alpha _{k} } u\right\| _{\dot{H}_{b_{k} }^{v} } \prod _{i\in I_{k} }\left\| D^{\alpha _{i} } u_{i} \right\| _{a_{i} }  \right)  \\
 &\lesssim\sum _{k=1}^{q}\left(\left\| u\right\| _{\dot{H}_{b_{k} }^{\left|\alpha _{k} \right|+v} } \prod _{i\in I_{k} }\left\| u\right\| _{\dot{H}_{a_{i} }^{\left|\alpha _{i} \right|} }  \right) \lesssim\left\| u\right\| _{\dot{H}_{r}^{s} } \left\| u\right\| _{\dot{H}_{r}^{s_{0} } }^{q-1} {\rm .} \end{split}
\end{eqnarray}
In view of (3.15) and (3.16), we immediately have
\begin{equation} \label{GrindEQ__3_17_}
A_{2} =\left\| f^{\left(q\right)} \left(u\right)\right\| _{p_{2} } \left\| \prod _{i=1}^{q}D^{\alpha _{i} } u \right\| _{\dot{H}_{r_{2} }^{v} } \lesssim\left\| u\right\| _{\dot{H}_{r}^{s_{0} } }^{\sigma } \left\| u\right\| _{\dot{H}_{r}^{s} } .
\end{equation}
Using (3.9), (3.14) and (3.17), we have (3.5).

\textbf{Case 2}: We consider the case $s>s_{0} +1$. Put $\beta _{0} =v$. In \cite{W04}, it was proved that one can choose $\beta _{i} \;\left(i=1,\;\ldots ,\;q\right)$ satisfying the following conditions:
\begin{equation} \label{GrindEQ__3_18_}
0\le \beta _{i} \le \left|\alpha _{i} \right|,\; s_{0} +\beta _{i} \ge \left|\alpha _{i} \right|,\; i=1,\;\ldots ,\;q,
\end{equation}
\begin{equation} \label{GrindEQ__3_19_}
\sum _{i=0}^{q}\beta _{i}  =s-s_{0} .
\end{equation}
For details, see the proof of Lemma 2.1. Put
\begin{equation} \label{GrindEQ__3_20_}
\frac{1}{c_{i} } :=\frac{1}{r} -\frac{s_{0} +\beta _{i} -\left|\alpha _{i} \right|}{n} ,\; i=1,\;\ldots ,\;q.
\end{equation}
Using (3.18) and (3.19), we can see that
\begin{equation} \label{GrindEQ__3_21_}
c_{i} >0,\; s_{0} \le s_{0} +\beta _{i} \le s,\; i=0,\;\ldots ,\;q.
\end{equation}
It also follows from (3.2) that
\begin{equation} \label{GrindEQ__3_22_}
\frac{1}{p} =\frac{\sigma -q+1}{a} +\sum _{i=1}^{q}\frac{1}{c_{i} }  ,
\end{equation}
where $a$ is given in (3.6). Using Lemma 2.2 (fractional product rule), we have
\begin{equation} \label{GrindEQ__3_23_}
A\lesssim\left\| f^{\left(q\right)} \left(u\right)\right\| _{\dot{H}_{p_{3} }^{v} } \left\| \prod _{i=1}^{q}D^{\alpha _{i} } u \right\| _{r_{3} } +\left\| f^{\left(q\right)} \left(u\right)\right\| _{p_{3} } \left\| \prod _{i=1}^{q}D^{\alpha _{i} } u \right\| _{\dot{H}_{r_{3} }^{v} } \equiv A_{3} +A_{4} ,
\end{equation}
where
\begin{equation} \label{GrindEQ__3_24_}
\frac{1}{p_{3} }:=\frac{\sigma -q+1}{a} ,\; \frac{1}{r_{3} }:=\sum _{i=1}^{q}\frac{1}{c_{i} }.
\end{equation}
First, we estimate $A_{3} $. Since $q\le\sigma,$ it follows from Lemma 2.4 (fractional chain rule) and (3.1) that
\begin{equation} \label{GrindEQ__3_25_}
\left\| f^{\left(q\right)} \left(u\right)\right\| _{\dot{H}_{p_{3} }^{v} } \lesssim\left\| f^{\left(q+1\right)} \left(u\right)\right\| _{\frac{a}{\sigma -q} } \left\| u\right\| _{\dot{H}_{a}^{v} } \lesssim\left\| u\right\| _{\dot{H}_{r}^{s_{0} } }^{\sigma -q} \left\| u\right\| _{\dot{H}_{r}^{s_{0} +\beta _{0} } } ,
\end{equation}
where we assume $\frac{a}{0}=\infty$ and the last inequality follows from the embeddings: $\dot{H}_{r}^{s_{0} +v} \subset \dot{H}_{a}^{v} ,\;\dot{H}_{r}^{s_{0} } \subset L^{a} $. We also have
\begin{equation} \label{GrindEQ__3_26_}
\left\| \prod _{i=1}^{q}D^{\alpha _{i} } u \right\| _{r_{3} } \lesssim\prod _{i=1}^{q}\left\| u\right\| _{\dot{H}_{c_{i} }^{\left|\alpha _{i} \right|} }  \lesssim\prod _{i=1}^{q}\left\| u\right\| _{\dot{H}_{r}^{s_{0} +\beta _{i} } }  ,
\end{equation}
where the last inequality follows from the embedding $\dot{H}_{r}^{s_{0} +\beta _{i} } \subset \dot{H}_{c_{i} }^{\left|\alpha _{i} \right|} $.
For $i=0,\;\ldots ,\;q$, we can take $0\le \theta _{i} \le 1$ satisfying $s_{0} +\beta _{i} =\theta _{i} s_{0} +\left(1-\theta _{i} \right)s$, since $s_{0} \le s_{0} +\beta _{i} \le s$.
Using (3.19), we can easily see that $\sum _{i=0}^{q}\theta _{i}  =q$ and $\sum _{i=0}^{q}\left(1-\theta _{i} \right) =1$. Hence, it follows from Lemma 2.7 (convexity H\"{o}lder inequality) that
\begin{equation} \label{GrindEQ__3_27_}
\prod _{i=0}^{q}\left\| u\right\| _{\dot{H}_{r}^{s_{0} +\beta _{i} } }  \lesssim\left\| u\right\| _{\dot{H}_{r}^{s_{0} } }^{q} \left\| u\right\| _{\dot{H}_{r}^{s} } .
\end{equation}
In view of (3.25)--(3.27), we have
\begin{equation} \label{GrindEQ__3_28_}
A_{3} \lesssim\left\| u\right\| _{\dot{H}_{r}^{s_{0} } }^{\sigma } \left\| u\right\| _{\dot{H}_{r}^{s} } .
\end{equation}
Next, we estimate $A_{4} $. Using H\"{o}lder inequality, we immediately have
\begin{equation} \label{GrindEQ__3_29_}
\left\| f^{\left(q\right)} \left(u\right)\right\| _{p_{3} } \lesssim\left\| \left|u\right|^{\sigma +1-q} \right\| _{p_{3} }=\left\| u\right\| _{a}^{\sigma +1-q} \lesssim\left\| u\right\| _{\dot{H}_{r}^{s_{0} } }^{\sigma +1-q} .
\end{equation}
Using Corollary 2.3, (3.20) and (3.24), we have
\begin{eqnarray}\begin{split} \label{GrindEQ__3_30_}
\left\| \prod _{i=1}^{q}D^{\alpha _{i} } u \right\| _{\dot{H}_{r_{3} }^{v} } &\lesssim\sum _{k=1}^{q}\left(\left\| D^{\alpha _{k} } u\right\| _{\dot{H}_{{}_{c_{k} } }^{v} } \prod _{i\in I_{k} }\left\| D^{\alpha _{i} } u_{i} \right\| _{c_{i} }  \right) \\
&\lesssim\sum _{k=1}^{q}\left(\left\| u\right\| _{\dot{H}_{c_{k} }^{\left|\alpha _{k} \right|+v} } \prod _{i\in I_{k} }\left\| u\right\| _{\dot{H}_{c_{i} }^{\left|\alpha _{i} \right|} }  \right)\\
&\lesssim\sum _{k=1}^{q}\left(\left\| u\right\| _{\dot{H}_{r}^{s_{0} +\beta _{k} +\beta _{0} } } \prod _{i\in I_{k} }\left\| u\right\| _{\dot{H}_{r}^{s_{0} +\beta _{i} } }  \right) {\rm ,} \end{split}
\end{eqnarray}
where the last inequality follows from the embeddings: $\dot{H}_{r}^{s_{0} +\beta _{k} +v} \subset \dot{H}_{c_{k} }^{\left|\alpha _{k} \right|+v} $ and $\dot{H}_{r}^{s_{0} +\beta _{i} } \subset \dot{H}_{c_{i} }^{\left|\alpha _{i} \right|} $.
In view of (3.19), we can also see that $s_{0} +\beta _{k} +\beta _{0} \le s$ for $k=1,\;\ldots ,\;q$. Thus we can take $0\le \tilde{\theta }_{k} \le 1$ satisfying $s_{0} +\beta _{k} +\beta _{0} =\tilde{\theta }_{k} s_{0} +(1-\tilde{\theta }_{k})s$, for $k=1,\;\ldots ,\;q$. In view of (3.19), we can see that
\begin{equation} \nonumber
\tilde{\theta }_{k} +\sum _{i\in I_{k} }\theta _{i}  =q-1,\; (1-\tilde{\theta }_{k} )+\sum _{i\in I_{k} }\left(1-\theta _{i} \right) =1.
\end{equation}
Hence, using Lemma 2.7 (convexity H\"{o}lder inequality), we have
\begin{equation} \label{GrindEQ__3_31_}
\left\| u\right\| _{\dot{H}_{r}^{s_{0} +\beta _{k} +\beta _{0} } } \prod _{i\in I_{k} }\left\| u\right\| _{\dot{H}_{r}^{s_{0} +\beta _{i} } }  \lesssim\left\| u\right\| _{\dot{H}_{r}^{s_{0} } }^{q-1} \left\| u\right\| _{\dot{H}_{r}^{s} } .
\end{equation}
(3.29), (3.30) and (3.31) yield that
\begin{equation} \label{GrindEQ__3_32_}
A_{4} \lesssim\left\| u\right\| _{\dot{H}_{r}^{s_{0} } }^{\sigma } \left\| u\right\| _{\dot{H}_{r}^{s} } .
\end{equation}
Using (3.23), (3.28) and (3.32), we immediately have (3.5). This completes the proof.
\end{proof}

\begin{rem}\label{rem 3.2.}
\textnormal{Lemma 3.1 generalizes Lemma 3.3 of \cite{AK21} and Lemma 2.1 of \cite{W04}. In fact, Lemma 3.3 of \cite{AK21} follows directly from Lemma 3.1 by putting $\delta=0$, $s_{0}=s$. Lemma 2.1 of \cite{W04} also follows directly from Lemma 3.1 by using the embeddings $\dot{B}_{r,2}^{s} \subset \dot{H}_{r}^{s}$, $\dot{H}_{r'}^{s}\subset \dot{B}_{r',2}^{s}$, where $\dot{B}_{p,q}^{s} $ is homogeneous Besov space (see e.g. \cite{WHHG11}).}
\end{rem}

\begin{rem}\label{rem 3.3.}
\textnormal{If $f\left(z\right)$ is a polynomial in $z$ and $\bar{z}$ satisfying $1<\deg \left(f\right)=1+\sigma $, we can see that the assumption $\left\lceil s\right\rceil \le\sigma +1$ can be removed in Lemma 3.2.}
\end{rem}

\begin{rem}[\cite{G17}]\label{rem 3.4.}
\textnormal{Let $B=B\left(0,\;1\right)=\left\{x\in \R^{n} ;\;\left|x\right|\le 1\right\}$ and $b>{\rm 0}$.
If $\frac{n}{\gamma } >b$, then $\left|x\right|^{-b} \in L^{\gamma } \left(B\right)$. And $\left|x\right|^{-b} \in L^{\gamma } \left(B^{C} \right)$, if $\frac{n}{\gamma } <b$.}
\end{rem}

Using Lemma 3.1 and Remark 3.4, we establish the estimates of nonlinearity $|x|^{-b} f\left(u\right).$

We divide our study in two cases: $n\ge 3$ and $n=1,\;2$.

\begin{lem}\label{lem 3.5.}
Let $n\ge 3$, $0<s<\frac{n}{2} +1$, $0<b<\min \left\{2,{\rm \; 1}+\frac{n-2s}{2} \right\}$ and $\sigma _{0} <\sigma <\sigma _{s} $. Assume that $f$ is of class $X\left(\sigma,s,b\right)$. Then we have
\begin{equation} \label{GrindEQ__3_33_}
\left\| |x|^{-b} \left|u\right|^{\sigma } v\right\| _{S'\left(L^{2} \right)} \lesssim\left\| u\right\| _{S\left(H^{s} \right)}^{\sigma } \left\| v\right\| _{S\left(L^{2} \right)} ,
\end{equation}
\begin{equation} \label{GrindEQ__3_34_}
\left\| |x|^{-b} f\left(u\right)\right\| _{S'(\dot{H}^{s})} \lesssim\left\| u\right\| _{S\left(H^{s} \right)}^{\sigma +1}.
\end{equation}
\end{lem}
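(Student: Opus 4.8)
The plan is to reduce both \eqref{GrindEQ__3_33_} and \eqref{GrindEQ__3_34_} to applications of Lemma 3.1 and Remark 3.4, after splitting $\R^{n}$ into the unit ball $B=B(0,1)$ and its complement $B^{C}$ and writing $\||x|^{-b}g\|_{X}\le \||x|^{-b}g\|_{X(B)}+\||x|^{-b}g\|_{X(B^{C})}$. On each piece I will use H\"older's inequality in space to pull out the singular weight into a Lebesgue norm $\||x|^{-b}\|_{L^{d}(B)}$ or $\||x|^{-b}\|_{L^{d}(B^{C})}$ — finite by Remark 3.4 as long as $n/d>b$ (on $B$) resp.\ $n/d<b$ (on $B^{C}$) — and then apply the nonlinear estimate of Lemma 3.1 to what remains. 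Concretely, for \eqref{GrindEQ__3_33_} I want to produce a bound of the form $\||x|^{-b}|u|^{\sigma}v\|_{L^{\gamma(q)'}_{t}\dot H^{0}_{q'}}\lesssim$ (time factor)$\cdot\|u\|_{S(H^{s})}^{\sigma}\|v\|_{S(L^{2})}$ by choosing an admissible pair $(\gamma(q),q)$ on the right and an admissible (or, for the $B^{C}$ piece, possibly different) exponent configuration so that, after H\"older in $x$ to extract $|x|^{-b}$, the factor $|u|^{\sigma}$ is estimated in a Lebesgue space controlled via Sobolev embedding by $\dot H^{s_{0}}_{r}$-norms with a suitable $0\le s_{0}\le s$, and $v$ is estimated in $L^{2}$-based Strichartz norms. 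The H\"older relations in time will be arranged to reproduce the dual Strichartz exponent $\gamma(q)'$, using $\sigma>\sigma_{0}=\frac{4-2b}{n}$ and $\sigma<\sigma_{s}$ precisely to keep the resulting time and space exponents in the admissible range. For \eqref{GrindEQ__3_34_} the new ingredient is that one must differentiate the weight as well: on $B^{C}$ the weight $|x|^{-b}$ is smooth with symbol-type bounds, so $\||x|^{-b}f(u)\|_{\dot H^{s}}\lesssim \|f(u)\|_{\dot H^{s}}$ locally away from the origin plus lower-order terms, while near the origin one uses the fractional product/chain rules (Lemma 2.2, Lemma 2.3) together with the elementary bound $\||x|^{-b}\|_{\dot H^{\delta}_{d}(B)}<\infty$ for a small $\delta$ with $b+\delta<n/d$, reducing matters to Lemma 3.1 applied with $\delta>0$.

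In more detail, for each of the two regions I will fix the admissible pair on the output side and choose the exponents for $u$ on the input side via Lemma 2.5 (Sobolev embedding) so that the chain of exponents balances. For the ball $B$: pick $q$ close to $2$ (so that $(\gamma(q),q)$ is admissible and $q'$ large), use H\"older $\frac{1}{q'}=\frac{1}{d}+\frac{\sigma}{\rho}+\frac{1}{p}$ in $x$ with $\||x|^{-b}\|_{L^{d}(B)}<\infty$ forcing $d<n/b$, embed $\dot H^{s}_{2}\subset L^{\rho}$ requiring $\frac{1}{\rho}\ge\frac12-\frac{s}{n}$, and take $p$ with $(\gamma(p),p)$ admissible for $v$; the time-H\"older $\frac{1}{\gamma(q)'}=\frac{\sigma}{\gamma_{1}}+\frac{1}{\gamma(p)}$ with $\dot H^{s}_{2}$-valued $L^{\gamma_{1}}_{t}$ norms of $u$ (which embed into $S(H^{s})$) then closes on $B$; the condition that all indices are in range is exactly where $\sigma>\sigma_{0}$ enters. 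For $B^{C}$: the weight is bounded, indeed $\||x|^{-b}\|_{L^{\infty}(B^{C})}\le 1$, but to gain a finite $L^{d}$ norm one needs $d>n/b$; here I instead absorb $|x|^{-b}\le 1$ directly and run the same H\"older/Strichartz scheme with $d=\infty$, or, when an integrable weight is needed, use $\||x|^{-b}\|_{L^{d}(B^{C})}<\infty$ for $d>n/b$ together with an admissible pair chosen near the other endpoint $q=\frac{2n}{n-2}$. The scattering-type constraint $\sigma<\sigma_{s}$ guarantees $\dot H^{s}_{2}\subset L^{\rho}$ with $\rho$ large enough that $\sigma/\rho$ leaves room for the remaining factors.

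For \eqref{GrindEQ__3_34_} I will combine the above with Lemma 3.1: on $B^{C}$ write $|x|^{-b}f(u)$ and apply the fractional product rule (Lemma 2.2) with one factor $|x|^{-b}$ (whose homogeneous Sobolev norms on $B^{C}$ are finite, as $\nabla^{k}|x|^{-b}$ decays) and the other $f(u)$, estimated by Lemma 3.1 with $\delta=0$, $s_{0}=s$, $p=q'$, $r$ from an admissible pair, to get $\|f(u)\|_{\dot H^{s}_{q'}}\lesssim\|u\|_{\dot H^{s}_{r}}^{\sigma+1}$; summing over admissible pairs and over $t$ gives the $S'(\dot H^{s})$ bound on $B^{C}$. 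On $B$ the singularity is active, so I H\"older out $|x|^{-b}\in \dot H^{\delta}_{d}(B)$ for a small $\delta>0$ with $b+\delta<n/d$ and apply the product rule to transfer $\delta$ derivatives onto the weight, leaving $\|f(u)\|_{\dot H^{s-\delta}_{p_{1}}}$, which is exactly the left-hand side of \eqref{GrindEQ__3_3_} in Lemma 3.1 with the given $\delta$ and a suitable $s_{0}\le s$ (chosen so that $\frac{1}{r}-\frac{s_{0}}{n}>0$ and the exponent identity \eqref{GrindEQ__3_2_} holds); this yields $\lesssim\|u\|_{\dot H^{s_{0}}_{r}}^{\sigma}\|u\|_{\dot H^{s}_{r}}$, both controlled by $\|u\|_{S(H^{s})}$ after choosing $(\gamma(r),r)$ admissible and using $H^{s}_{r}\subset\dot H^{s_{0}}_{r}$ via Lemma 2.6. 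The polynomial case in the definition of $X(\sigma,s,b)$ is handled identically since Remark 3.3 removes the constraint $\lceil s\rceil\le\sigma+1$ there.

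The main obstacle is bookkeeping: one must exhibit, for each region and for both estimates, an explicit admissible pair on the output side and a consistent set of H\"older exponents in space and time such that (i) the weight lies in the required $L^{d}$ (or $\dot H^{\delta}_{d}$) space on that region, by Remark 3.4; (ii) all Sobolev embeddings invoked (Lemma 2.5, Lemma 2.6) are legitimate, i.e.\ the scaling identities hold with exponents in $(1,\infty)$; and (iii) the time exponents sum to the dual admissible exponent. The nontrivial point is that such a choice exists precisely when $\sigma_{0}<\sigma<\sigma_{s}$ and $0<b<\min\{2,1+\frac{n-2s}{2}\}$: the lower bound on $\sigma$ and the upper bound on $b$ give enough integrability near the origin for the weight, while the upper bound on $\sigma$ keeps the Sobolev exponent $\rho$ (with $\dot H^{s}\subset L^{\rho}$) finite and $>2\sigma/(\sigma+\text{slack})$. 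I will verify these inequalities directly; once the exponents are pinned down, each estimate is a one-line application of H\"older, Strichartz (Lemma 2.9), and Lemma 3.1.
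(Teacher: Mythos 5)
Your overall architecture coincides with the paper's: split into $B$ and $B^{C}$, control the weight by Remark 3.4, use the fractional product rule plus Lemma 3.1 for the spatial estimate, and close with H\"older in time and the relation $\frac{1}{\gamma(\bar r)'}=\frac{\sigma+1}{\gamma(r_1)}$ between dual and direct admissible exponents. However, two of the specific mechanisms you propose would not go through, and they are precisely the places where the hypotheses $\sigma_0<\sigma$ and $b<1+\frac{n-2s}{2}$ are consumed. First, on $B^{C}$ you suggest absorbing $|x|^{-b}\le 1$ and running the unweighted scheme with $d=\infty$. This fails for $\sigma$ near $\sigma_0=\frac{4-2b}{n}$: without the weight the space--time exponents only balance for $\sigma\ge\frac{4}{n}$ (the homogeneous $L^2$-critical threshold), and $\sigma_0<\frac{4}{n}$. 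The decay of $|x|^{-b}$ at infinity is essential, which is why the paper takes $\chi_{B^{C}}|x|^{-b}\in L^{\gamma_1}$ with $\frac{n}{\gamma_1}<b$ and builds the system (3.41) whose solvability is exactly equivalent to $\sigma>\sigma_0$ (together with $\sigma<\sigma_s$). Your alternative clause (``use $d>n/b$ near the endpoint $\bar r=\frac{2n}{n-2}$'') is the correct branch, but you do not commit to it or verify that the resulting exponent system is solvable.

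Second, for \eqref{GrindEQ__3_34_} on the ball you plan to transfer only $\delta$ derivatives onto the weight, leaving $\|f(u)\|_{\dot H^{s-\delta}_{p_1}}$ for Lemma 3.1 with $\delta>0$. The product rule available in the paper (Lemma 2.2) does not split derivatives between the factors: it produces the two terms $\|\chi_B|x|^{-b}\|_{\gamma_4}\|f(u)\|_{\dot H^{s}_{p_4}}$ and $\|\chi_B|x|^{-b}\|_{\dot H^{s}_{\gamma_3}}\|f(u)\|_{p_3}$, i.e.\ either no derivatives or all $s$ derivatives land on the weight. A Leibniz rule with fractional splitting exists in the literature but is not among the stated tools, so as written this step is unsupported. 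Moreover, the paper's second term is where the hypothesis $b<1+\frac{n-2s}{2}$ actually does its work: finiteness of $\|\chi_B|x|^{-b}\|_{\dot H^{s}_{\gamma_3}}$ requires $\frac{n}{\gamma_3}>b+s$, and the solvability of the corresponding system (3.55) for $s_1$ is equivalent to $b<1+\frac{n-2s}{2}$ together with $\sigma<\sigma_s$. Your proposal attributes the role of the upper bound on $b$ vaguely to ``integrability near the origin'' but never isolates this constraint; since the whole content of the lemma is that these exponent systems are solvable exactly under the stated hypotheses, deferring that verification leaves the essential part of the proof missing.
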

\begin{proof}
Putting
\begin{equation} \label{GrindEQ__3_35_}
\bar{r}:=\frac{2n}{n-2} ,\; \frac{1}{r_{1} } :=\frac{1}{2} -\frac{1}{n\left(\sigma +1\right)} ,
\end{equation}
we can easily see that $\left(\gamma \left(\bar{r}\right),\bar{r}\right)$ and $\left(\gamma \left(r_{1} \right),r_{1} \right)$ are admissible. Furthermore, we can see that
\begin{equation} \label{GrindEQ__3_36_}
\frac{1}{\gamma \left(\bar{r}\right)^{{'} } } =\frac{\sigma +1}{\gamma \left(r_{1} \right)} .
\end{equation}
For $B=B\left(0,\;1\right)=\left\{x\in \R^{n} ;\;\left|x\right|\le 1\right\}$, we have
\begin{equation} \label{GrindEQ__3_37_}
\left\||x|^{-b} f\left(u\right)\right\| _{S'(\dot{H}^{s})}\le\left\| |x|^{-b} f\left(u\right)\right\| _{L^{\gamma \left(\bar{r}\right)^{{'} } } \left(\R,\;\dot{H}_{\bar{r}'}^{s}\right)} \le C_{1} +C_{2} ,
\end{equation}
\begin{equation} \label{GrindEQ__3_38_}
\left\|  |x|^{-b} \left|u\right|^{\sigma } v\right\| _{S'\left(L^{2} \right)}\le\left\||x|^{-b} f\left(u\right)\right\| _{L^{\gamma \left(\bar{r}\right)^{{'} } } \left(\R,\;L^{\bar{r}'} \right)}\le D_{1} +D_{2} ,
\end{equation}
where
\begin{equation}\label{GrindEQ__3_39_}
C_{1} = \left\|\chi_{B^{C}}|x|^{-b} f\left(u\right)\right\| _{L^{\gamma \left(\bar{r}\right)^{{'} } } \left(\R,\;\dot{H}_{\bar{r}'}^{s}\right)} ,~
C_{2} =\left\| \chi_{B}|x|^{-b} f\left(u\right)\right\| _{L^{\gamma \left(\bar{r}\right)^{{'} } } \left(\R,\;\dot{H}_{\bar{r}'}^{s}\right)},
\end{equation}
\begin{equation}\label{GrindEQ__3_40_}
D_{1} = \left\| \chi_{B^{C}}|x|^{-b} \left|u\right|^{\sigma } v\right\| _{L^{\gamma \left(\bar{r}\right)^{{'} } } \left(\R,\;L^{\bar{r}'} \right)},~
D_{2} = \left\| \chi_{B}|x|^{-b} \left|u\right|^{\sigma } v\right\| _{L^{\gamma \left(\bar{r}\right)^{{'} } } \left(\R,\;L^{\bar{r}'} \right)}.
\end{equation}

First, we estimate $C_{1} $ and $D_{1} $. We can take $s_{0} \left(>0\right)$ satisfying the following system:
\begin{equation} \label{GrindEQ__3_41_}
\left\{\begin{array}{l} {0<\frac{1}{\bar{r}'} -\sigma \left(\frac{1}{r_{1} } -\frac{s_{0} }{n} \right)-\frac{1}{r_{1} } <\frac{b}{n} ,\;} \\ {0<s_{0} <s,} \\ {\frac{1}{r_{1} } >\frac{s_{0} }{n} .} \end{array}\right.
\end{equation}
In fact, we can see that the first equation in (3.41) is equivalent to
\begin{equation} \label{GrindEQ__3_42_}
\frac{n}{2} -\frac{2}{\sigma } <s_{0} <\frac{n}{2} -\frac{2-b}{\sigma } ,
\end{equation}
and the third equation in (3.41) is equivalent to
\begin{equation} \label{GrindEQ__3_43_}
s_{0} <\frac{n}{2} -\frac{1}{\sigma +1} .
\end{equation}
Thus the system (3.41) is equivalent to
\begin{equation} \label{GrindEQ__3_44_}
\max \left\{0,\;\frac{n}{2} -\frac{2}{\sigma } \right\}<s_{0} <\min \left\{\frac{n}{2} -\frac{2-b}{\sigma } ,\;s,\;\frac{n}{2} -\frac{1}{\sigma +1} \right\}.
\end{equation}
It is obvious that $\frac{n}{2} -\frac{1}{\sigma +1} >0$, since $n\ge 3$. One can easily verify that $\frac{n}{2} -\frac{2-b}{\sigma } >0$ if, and only if, $\sigma >\frac{4-2b}{n} $. It is also obvious that
\begin{equation} \label{GrindEQ__3_45_}
\frac{n}{2} -\frac{2}{\sigma } <\min \left\{\frac{n}{2} -\frac{2-b}{\sigma } ,\;\frac{n}{2} -\frac{1}{\sigma +1} \right\}.
\end{equation}
We can see that $\frac{n}{2} -\frac{2}{\sigma } <s$ is equivalent to $\left(n-2s\right)\sigma <4$. Since $\sigma <\sigma _{s} $, we have $\frac{n}{2} -\frac{2}{\sigma } <s$. Hence we can take $s_{0} \left(>0\right)$ satisfying (3.44). Putting
\begin{equation} \label{GrindEQ__3_46_}
\frac{1}{\gamma _{1} } :=\frac{1}{\bar{r}'} -\sigma \left(\frac{1}{r_{1} } -\frac{s_{0} }{n} \right)-\frac{1}{r_{1} } ,
\end{equation}
it follows from (3.41) that $0<\frac{1}{\gamma _{1} } <\frac{b}{n} $. Lemma 2.2 (fractional product rule) yields
\begin{equation} \label{GrindEQ__3_47_}
\left\| \chi_{B^{C}}|x|^{-b} f\left(u\right)\right\| _{\dot{H}_{\bar{r}'}^{s}} \lesssim\left\| \chi_{B^{C}}|x|^{-b} \right\| _{\gamma _{1}} \left\| f\left(u\right)\right\| _{\dot{H}_{p_{1} }^{s} } +\left\| \chi_{B^{C}}|x|^{-b}\right\| _{\dot{H}_{\gamma _{2} }^{s} } \left\| f\left(u\right)\right\| _{p_{2}} ,
\end{equation}
where
\begin{equation} \label{GrindEQ__3_48_}
\frac{1}{p_{1} }:=\sigma \left(\frac{1}{r_{1} } -\frac{s_{0} }{n} \right)+\frac{1}{r_{1} } ,\; \frac{1}{\gamma _{2} }:=\frac{1}{\gamma _{1} } +\frac{s_{0} }{n} ,\; \frac{1}{p_{2} }:=\left(\sigma +1\right)\left(\frac{1}{r_{1} } -\frac{s_{0} }{n} \right).
\end{equation}
It follows from Remark 3.4 and the fact $\frac{n}{\gamma _{1} } <b$ that
$\left\| \chi_{B^{C}}|x|^{-b} \right\| _{L^{\gamma _{1} }}<\infty$. Using the fact $\frac{n}{\gamma _{2} } <b+s_{0} <b+s$, we also have
\begin{equation} \label{GrindEQ__3_49_}
\left\| \chi_{B^{C}}|x|^{-b} \right\| _{\dot{H}_{\gamma _{2} }^{s}}<\infty.
\end{equation}
In fact, putting $\frac{n}{\bar{\gamma}_{2}}:=\left\lceil s\right\rceil-s+\frac{n}{\gamma_{2}}$, we can see that $\frac{n}{\bar{\gamma}_{2}}<b+\left\lceil s\right\rceil$ and $\dot{H}_{\bar{\gamma}_{2} }^{\left\lceil s\right\rceil} \subset \dot{H}_{\gamma_{2} }^{s}$. Hence, it follows from Remark 3.4 that
\begin{eqnarray}\begin{split}\nonumber
\left\|\chi_{B^{C}}|x|^{-b}\right\|_{\dot{H}_{\gamma_{2} }^{s}}&\lesssim\left\|\chi_{B^{C}}|x|^{-b}\right\|_{\dot{H}_{\bar{\gamma}_{2} }^{\left\lceil s\right\rceil}}\lesssim \sum _{\left|\alpha \right|=\left\lceil s\right\rceil}\left\| D^{\alpha }(\chi_{B^{C}}|x|^{-b})\right\| _{\bar{\gamma}_{2}}\\
&\lesssim \left\| \chi_{B^{C}}|x|^{-b-\left\lceil s\right\rceil}\right\| _{\bar{\gamma}_{2}}<\infty.
\end{split}\end{eqnarray}
 We also have
\begin{equation} \label{GrindEQ__3_50_}
\left\| f\left(u\right)\right\| _{p_{2} } \lesssim\left\| \left|u\right|^{\sigma +1} \right\| _{p_{2} } \lesssim\left\| u\right\| _{\dot{H}_{r_{1}}^{s_{0} } }^{\sigma +1} .
\end{equation}
Hence using (3.47), (3.49), (3.50), Lemma 3.1 and Lemma 2.6, we have
\begin{eqnarray}\begin{split} \label{GrindEQ__3_51_}
\left\|\chi_{B^{C}} |x|^{-b} f\left(u\right)\right\| _{\dot{H}_{\bar{r}'}^{s} } &\lesssim\left\| f\left(u\right)\right\| _{\dot{H}_{p_{1} }^{s} } +\left\| f\left(u\right)\right\| _{p_{2} } \lesssim\left\| u\right\| _{\dot{H}_{r_{1} }^{s_{0} } }^{\sigma } \left(\left\| u\right\| _{\dot{H}_{r_{1} }^{s} } +\left\| u\right\| _{\dot{H}_{r_{1} }^{s_{0} } } \right) \\
&\lesssim\left\| u\right\| _{H_{r_{1} }^{s_{0} } }^{\sigma } \left(\left\| u\right\| _{H_{r_{1} }^{s} } +\left\| u\right\| _{H_{r_{1} }^{s_{0} } } \right)\lesssim \left\| u\right\| _{H_{r_{1} }^{s} }^{\sigma +1} .
\end{split}
\end{eqnarray}
It also follows from H\"{o}lder inequality and (3.48) that
\begin{equation} \label{GrindEQ__3_52_}
\left\| \chi_{B^{C}}|x|^{-b} \left|u\right|^{\sigma } v\right\| _{L^{\bar{r}'} } \le \left\| \chi_{B^{C}}|x|^{-b} \right\| _{L^{\gamma _{1} } } \left\| \left|u\right|^{\sigma } v\right\| _{p_{1} } \lesssim\left\| u\right\| _{\dot{H}_{r_{1} }^{s_{0} } }^{\sigma } \left\| v\right\| _{r_{1} } \lesssim\left\| u\right\| _{H_{r_{1} }^{s} }^{\sigma } \left\| v\right\| _{r_{1} } .
\end{equation}
Using (3.51), (3.52), (3.36) and H\"{o}lder inequality, we have
\begin{equation} \label{GrindEQ__3_53_}
C_{1}=\left\| \chi_{B^{C}}|x|^{-b} f\left(u\right)\right\| _{L^{\gamma \left(\bar{r}\right)^{{'} } } \left(\R,\;\dot{H}_{\bar{r}'}^{s} \right)} \lesssim\left\| u\right\| _{L^{\gamma \left(r_{1} \right)} \left(\R,\;H_{r_{1} }^{s} \right)}^{\sigma +1} \le \left\| u\right\| _{S\left(H^{s} \right)}^{\sigma +1} ,
\end{equation}
\begin{eqnarray}\begin{split} \label{GrindEQ__3_54_}
D_{1}=\left\| \chi_{B^{C}}|x|^{-b} \left|u\right|^{\sigma } v\right\| _{L^{\gamma \left(\bar{r}\right)^{{'} } } \left(\R,\;L^{\bar{r}'} \right)} &\lesssim\left\| u\right\| _{L^{\gamma \left(r_{1} \right)} \left(\R,\;H_{r_{1} }^{s} \right)}^{\sigma } \left\| v\right\| _{L^{\gamma \left(r_{1} \right)} \left(\R,\;L^{r_{1} } \right)}\\
&\le\left\| u\right\| _{S\left(H^{s} \right)}^{\sigma } \left\| v\right\| _{S\left(L^{2} \right)} .
\end{split}\end{eqnarray}
Next, we estimate $C_{2} $ and $D_{2} $. We can take $s_{1} \left(>0\right)$ satisfying the following system:
\begin{equation} \label{GrindEQ__3_55_}
\left\{\begin{array}{l} {\frac{1}{\bar{r}'} -\left(\sigma +1\right)\left(\frac{1}{r_{1} } -\frac{s_{1} }{n} \right)>\frac{b+s}{n} ,\;} \\ {0<s_{1} <s,} \\ {\frac{1}{r_{1} } >\frac{s_{1} }{n} .} \end{array}\right.
\end{equation}
In fact, we can see that the first equation in (3.55) is equivalent to
\begin{equation} \label{GrindEQ__3_56_}
\frac{n}{2} -\frac{n-2s+4-2b}{2\left(\sigma +1\right)} <s_{1} .
\end{equation}
Hence the system (3.55) is equivalent to
\begin{equation} \label{GrindEQ__3_57_}
\max \left\{0,\;\frac{n}{2} -\frac{n-2s+4-2b}{2\left(\sigma +1\right)} \right\}<s_{1} <\min \left\{s,\;\frac{n}{2} -\frac{1}{\sigma +1} \right\}
\end{equation}
We can see that $\frac{n}{2} -\frac{n-2s+4-2b}{2\left(\sigma +1\right)} <s$ is equivalent to
\begin{equation} \label{GrindEQ__3_58_}
\sigma \left(n-2s\right)<4-2b.
\end{equation}
If $s\ge \frac{n}{2} $, (3.58) holds for any $\sigma >0$, since $b<2$. If $s<\frac{n}{2} $, (3.58) is equivalent to $\sigma <\frac{4-2b}{n-2s} $.

\noindent We can also see that $\frac{n}{2} -\frac{n-2s+4-2b}{2\left(\sigma +1\right)} <\frac{n}{2} -\frac{1}{\sigma +1} $ is equivalent to $b<1+\frac{n-2s}{2} $. Hence, using the hypothesis of this lemma, we can take $s_{1} \left(>0\right)$ satisfying (3.57). Putting
\begin{equation} \label{GrindEQ__3_59_}
\frac{1}{p_{3} }:=\left(\sigma +1\right)\left(\frac{1}{r_{1} } -\frac{s_{1} }{n} \right),\; \frac{1}{\gamma _{3} }:=\frac{1}{\bar{r}'} -\frac{1}{p_{3} } ,
\end{equation}
\begin{equation} \label{GrindEQ__3_60_}
\frac{1}{\gamma _{4} }:=\frac{1}{\gamma _{3} } -\frac{s_{1} }{n} ,\; \frac{1}{p_{4} } :=\frac{1}{p_{3} } +\frac{s_{1} }{n} =\sigma \left(\frac{1}{r_{1} } -\frac{s_{1} }{n} \right)+\frac{1}{r_{1} } ,
\end{equation}
we have
\begin{equation} \label{GrindEQ__3_61_}
\frac{1}{\bar{r}'} =\frac{1}{p_{3} } +\frac{1}{\gamma _{3} } =\frac{1}{\gamma _{4} } +\frac{1}{p_{4} } .
\end{equation}
We can also see that $\frac{n}{\gamma _{3} } >b+s$ and $\frac{n}{\gamma _{4} } =\frac{n}{\gamma _{3} } -s_{1} >b+s-s_{1} >b$. Thus using Remark 3.4 and the same argument as in the estimate of (3.49), we can see that $\left\| \chi_{B}|x|^{-b} \right\| _{L^{\gamma _{4} }} $ and $\left\|\chi_{B} |x|^{-b} \right\| _{\dot{H}_{\gamma _{3} }^{s} } $ are finite.
Repeating the same argument as in the proof of (3.51) and (3.52), we have
\begin{equation} \label{GrindEQ__3_62_}
\left\| \chi_{B}|x|^{-b} f\left(u\right)\right\| _{\dot{H}_{\bar{r}}^{s} } \lesssim\left\| \chi_{B}|x|^{-b} \right\| _{L^{\gamma _{4} } } \left\| f\left(u\right)\right\| _{\dot{H}_{p_{4} }^{s} } +\left\| \chi_{B}|x|^{-b} \right\| _{\dot{H}_{\gamma _{3} }^{s} } \left\| f\left(u\right)\right\| _{L^{p_{3} } } \lesssim\left\| u\right\| _{H_{r_{1} }^{s} }^{\sigma +1} .
\end{equation}
\begin{equation} \label{GrindEQ__3_63_}
\left\| \chi_{B}|x|^{-b} \left|u\right|^{\sigma } v\right\| _{L^{\bar{r}'} } \le \left\| \chi_{B}|x|^{-b} \right\| _{L^{\gamma _{4} }} \left\| \left|u\right|^{\sigma } v\right\| _{p_{4} } \lesssim\left\| u\right\| _{\dot{H}_{r_{1} }^{s_{1} } }^{\sigma } \left\| v\right\| _{r_{1} } \lesssim\left\| u\right\| _{H_{r_{1} }^{s} }^{\sigma } \left\| v\right\| _{r_{1} } .
\end{equation}
Using (3.62), (3.63), (3.36) and H\"{o}lder inequality, we have
\begin{equation} \label{GrindEQ__3_64_}
C_{2} = \left\|\chi_{B}|x|^{-b} f\left(u\right)\right\| _{L^{\gamma \left(\bar{r}\right)^{{'} } } \left(\R,\;\dot{H}_{\bar{r}'}^{s} \right)} \lesssim\left\| u\right\| _{L^{\gamma \left(r_{1} \right)} \left(\R,\;H_{r_{1} }^{s} \right)}^{\sigma +1} \lesssim\left\| u\right\| _{S\left(H^{s} \right)}^{\sigma +1} ,
\end{equation}
\begin{equation} \label{GrindEQ__3_65_}
D_{2} = \left\| \chi_{B}|x|^{-b} \left|u\right|^{\sigma } v\right\| _{L^{\gamma \left(\bar{r}\right)^{{'} } } \left(\R,\;L^{\bar{r}'} \right)} \lesssim\left\| u\right\| _{S\left(H^{s} \right)}^{\sigma } \left\| v\right\| _{S\left(L^{2} \right)} .
\end{equation}
In virtue of (3.37), (3.38), (3.53), (3.54), (3.64) and (3.65), we get the desired results.
\end{proof}

\begin{lem}\label{lem 3.6.}
Let $n=1,\;2$, $0<s<n$, $0<b<n-s$ and $\sigma _{0} <\sigma <\sigma _{s} $. Assume that $f$ is of class $X\left(\sigma,s,b\right)$. Then we have
\begin{equation} \label{GrindEQ__3_66_}
\left\| |x|^{-b} \left|u\right|^{\sigma } v\right\| _{S'\left(L^{2} \right)} \lesssim\left\| u\right\| _{S\left(H^{s} \right)}^{\sigma } \left\| v\right\| _{S\left(L^{2} \right)} ,
\end{equation}
\begin{equation} \label{GrindEQ__3_67_}
\left\| |x|^{-b} f\left(u\right)\right\| _{S'(\dot{H}^{s})} \lesssim\left\| u\right\| _{S\left(H^{s} \right)}^{\sigma +1} .
\end{equation}
\end{lem}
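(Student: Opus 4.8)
The plan is to follow the scheme of Lemma~3.5, adapting it to the dimensions $n=1,2$, where the formal endpoint Strichartz pair $\left(2,\tfrac{2n}{n-2}\right)$ is unavailable. As there, I would write $|x|^{-b}=\chi_{B}|x|^{-b}+\chi_{B^{C}}|x|^{-b}$ with $B=B(0,1)$ and treat the two pieces separately: on $B^{C}$ the weight is bounded and decaying, so that part behaves essentially like the homogeneous problem, while on $B$ the weight is singular and the slack needed to absorb it comes from the hypothesis $b<n-s$ (equivalently $b+s<n$). First I would fix two admissible pairs $(\gamma(\bar r),\bar r)$ and $(\gamma(r_{1}),r_{1})$ --- now with $\bar r$ chosen strictly below the formal endpoint (so that $\gamma(\bar r)>2$ when $n=2$, and $\gamma(\bar r)\in[4,\infty]$ when $n=1$) --- subject to the scaling identity $\tfrac{1}{\gamma(\bar r)'}=\tfrac{\sigma+1}{\gamma(r_{1})}$. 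This identity is what forces the time integrations to close on all of $\R$ and hence gives a global (scattering-type) bound rather than merely a local one.

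For the $\chi_{B^{C}}|x|^{-b}$ contribution I would repeat the argument used for $C_{1}$ and $D_{1}$ in Lemma~3.5: choose a small $s_{0}>0$ solving the analogue of the system (3.41), apply the fractional product rule (Lemma~2.2) to split $\|\chi_{B^{C}}|x|^{-b}f(u)\|_{\dot H^{s}_{\bar r'}}$ into a term carrying $\|\chi_{B^{C}}|x|^{-b}\|_{L^{\gamma_{1}}}$ and one carrying $\|\chi_{B^{C}}|x|^{-b}\|_{\dot H^{s}_{\gamma_{2}}}$, check finiteness of both weight norms via Remark~3.4 (using the embedding $\dot H^{\lceil s\rceil}_{\bar\gamma_{2}}\subset\dot H^{s}_{\gamma_{2}}$ to reduce the fractional-Sobolev norm of the weight to an integer-order computation, exactly as in the display after equation (3.49)), and then bound $\|f(u)\|_{\dot H^{s}_{p_{1}}}$ and $\|f(u)\|_{L^{p_{2}}}$ by powers of $\|u\|_{H^{s}_{r_{1}}}$ via Lemma~3.1 and Lemma~2.6. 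The $\chi_{B}|x|^{-b}$ contribution is handled identically with a parameter $s_{1}>0$ solving the analogue of the system (3.55); here the relevant weight norms require conditions of the form $\tfrac{n}{\gamma}>b$ or $\tfrac{n}{\gamma}>b+s$ for the pertinent exponents $\gamma$, which is possible precisely because $b+s<n$. Summing the four pieces and using H\"{o}lder in time with $\tfrac{1}{\gamma(\bar r)'}=\tfrac{\sigma+1}{\gamma(r_{1})}$ yields (3.67). The bound (3.66) for $|x|^{-b}|u|^{\sigma}v$ is the same but lighter, since no fractional derivatives are involved: one only uses H\"{o}lder in space, the weight norm $\|\chi_{\bullet}|x|^{-b}\|_{L^{\gamma}}$, and a Sobolev embedding $\dot H^{s_{0}}_{r_{1}}\subset L^{q}$, followed by H\"{o}lder in time.

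I expect the main obstacle to be the exponent bookkeeping: one must produce admissible pairs $(\gamma(\bar r),\bar r)$, $(\gamma(r_{1}),r_{1})$ together with parameters $0<s_{0},s_{1}<s$ simultaneously satisfying (i) the Strichartz scaling relation, (ii) all the H\"{o}lder relations among $\bar r'$, the $\gamma_{i}$'s, the $p_{i}$'s and $r_{1}$, (iii) the admissibility restrictions $2\le\bar r,r_{1}<\infty$ (or $\le\infty$ for $n=1$), and (iv) the integrability windows --- roughly $\tfrac{n}{\gamma}<b$ (resp.\ $<b+s$) on $B^{C}$ and $\tfrac{n}{\gamma}>b$ (resp.\ $>b+s$) on $B$ --- for the various exponents $\gamma$ attached to the weight. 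Reducing this to a chain of scalar inequalities and verifying that it is nonempty is where $\sigma>\sigma_{0}$, $\sigma<\sigma_{s}$, $b<n-s$ and $s<n$ all enter, in direct analogy with the computations (3.42)--(3.45) and (3.56)--(3.58) in the $n\ge 3$ case. A secondary point requiring care is that for $n=2$ and $1\le s<2$ one has $\lceil s\rceil=2$, so the weight estimate must be run at the integer regularity $\lceil s\rceil$ and then embedded down; the application of Lemma~3.1 at regularity $s$ relies on $f$ being of class $X(\sigma,s,b)$ (in particular $\lceil s\rceil\le\sigma+1$, or $f$ polynomial, cf.\ Remark~3.3).
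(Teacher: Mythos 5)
Your proposal follows essentially the same route as the paper: the paper also reduces to the scheme of Lemma~3.5 by picking a large admissible exponent $\hat r$ with $\tfrac{b+s}{n}+\tfrac1{\hat r}<1$ (this is exactly where $b<n-s$ enters), defining the companion exponent $r_2$ through the scaling identity $\tfrac{1}{\gamma(\hat r)'}=\tfrac{\sigma+1}{\gamma(r_2)}$, checking $r_2>2$ via $\sigma>\sigma_0$, and then running the inner/outer ball decomposition with auxiliary parameters $s_2,s_3$ solving the analogues of the systems (3.41) and (3.55). The exponent bookkeeping you flag as the main obstacle is indeed the entire content of the paper's argument, and it closes exactly as you predict, using $\sigma>\sigma_0$, $\sigma<\sigma_s$ and $b+s<n$.
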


\begin{proof}
We use the same argument as in the proof of Lemma 3.5 and we only sketch the proof. Since $b+s<n$, there exists $\hat{r}$ large enough such that
\begin{equation} \label{GrindEQ__3_68_}
\frac{b+s}{n} +\frac{1}{\hat{r}} <1,\; 2<\hat{r}<\infty .
\end{equation}
Obviously $\left(\gamma \left(\hat{r}\right),\;\hat{r}\right)$ is admissible. Put
\begin{equation} \label{GrindEQ__3_69_}
\frac{1}{r_{2} } :=\frac{1}{2} -\frac{2}{n\left(\sigma +1\right)} \left(1-\frac{n}{4} +\frac{n}{2\hat{r}} \right).
\end{equation}
One can verify that $\frac{1}{r_{2} } >0$ if, and only if,
\begin{equation} \label{GrindEQ__3_70_}
\sigma >\frac{4-2n}{n} +\frac{2}{\hat{r}} .
\end{equation}
In view of (3.68), we have $\frac{1}{\hat{r}} <\frac{n-b-s}{n} <\frac{n-b}{n} $. Hence we have
\begin{equation} \label{GrindEQ__3_71_}
\frac{4-2b}{n} >\frac{4-2n}{n} +\frac{2}{\hat{r}} .
\end{equation}
Using (3.71) and the hypothesis $\sigma >\frac{4-2b}{n} $, we have (3.70) which is equivalent to $\frac{1}{r_{2} } >0$. Noticing that
\begin{equation} \label{GrindEQ__3_72_}
1-\frac{n}{4} +\frac{n}{2\hat{r}} >1-\frac{n}{4} >0,
\end{equation}
we also have $\frac{1}{r_{2} } <\frac{1}{2} $. Thus we can see that $r_{2} >2$, i.e. $\left(\gamma \left(r_{2} \right),\;r_{2} \right)$ is admissible.
Moreover, we have
\begin{equation} \nonumber
\frac{1}{\gamma \left(\hat{r}\right)^{{'} } } =\frac{\sigma +1}{\gamma \left(r_{2} \right)} .
\end{equation}
For $B=B\left(0,\;1\right)=\left\{x\in \R^{n} ;\;\left|x\right|\le 1\right\}$, we have
\begin{equation} \nonumber
\left\||x|^{-b} f\left(u\right) \right\| _{S'(\dot{H}^{s})}\le\left\| |x|^{-b} f\left(u\right)\right\| _{L^{\gamma \left(\hat{r}\right)^{{'} } } \left(\R,\;\dot{H}_{\hat{r}'}^{s}\right)} \le E_{1} +E_{2} ,
\end{equation}
\begin{equation} \nonumber
\left\||x|^{-b} \left|u\right|^{\sigma } v \right\| _{S'\left(L^{2} \right)}\le\left\||x|^{-b} \left|u\right|^{\sigma } v\right\| _{L^{\gamma \left(\hat{r}\right)^{{'} } } \left(\R,\;L^{\hat{r}'} \right)}\le F_{1} +F_{2} ,
\end{equation}
where
\begin{equation}\nonumber
E_{1} = \left\|\chi_{B^{C}}|x|^{-b} f\left(u\right)\right\| _{L^{\gamma \left(\hat{r}\right)^{{'} } } \left(\R,\;\dot{H}_{\hat{r}'}^{s}\right)} ,~
E_{2} =\left\| \chi_{B}|x|^{-b} f\left(u\right)\right\| _{L^{\gamma \left(\hat{r}\right)^{{'} } } \left(\R,\;\dot{H}_{\hat{r}'}^{s}\right)},
\end{equation}
\begin{equation}\label{GrindEQ__3_40_}
F_{1} = \left\| \chi_{B^{C}}|x|^{-b} \left|u\right|^{\sigma } v\right\| _{L^{\gamma \left(\hat{r}\right)^{{'} } } \left(\R,\;L^{\hat{r}'} \right)},~
F_{2} = \left\| \chi_{B}|x|^{-b} \left|u\right|^{\sigma } v\right\| _{L^{\gamma \left(\hat{r}\right)^{{'} } } \left(\R,\;L^{\hat{r}'} \right)}.
\end{equation}

\noindent First, we estimate $E_{1} $ and $F_{1} $.
We can take $s_{2} \left(>0\right)$ satisfying the following system:
\begin{equation} \label{GrindEQ__3_74_}
\left\{\begin{array}{l} {0<\frac{1}{\hat{r}'} -\sigma \left(\frac{1}{r_{2} } -\frac{s_{2} }{n} \right)-\frac{1}{r_{2} } <\frac{b}{n} ,\;} \\ {0<s_{2} <s,} \\ {\frac{1}{r_{2} } >\frac{s_{2} }{n} .} \end{array}\right.
\end{equation}
In fact, one can easily see that the first equation in (3.74) is equivalent to
\begin{equation} \label{GrindEQ__3_75_}
\frac{n}{2} -\frac{2}{\sigma } <s_{2} <\frac{n}{2} -\frac{2-b}{\sigma } .
\end{equation}
Thus the system (3.74) is equivalent to
\begin{equation} \label{GrindEQ__3_76_}
\max \left\{\frac{n}{2} -\frac{2}{\sigma } ,\;0\right\}<s_{2} <\min \left\{\frac{n}{2} -\frac{2-b}{\sigma } ,\;s,\;\frac{n}{r_{2} } \right\}.
\end{equation}
One can easily verify that $\frac{n}{2} -\frac{2-b}{\sigma } >0$, since $\sigma >\frac{4-2b}{n} $. We can also see that $\frac{n}{2} -\frac{2}{\sigma } <s$, since $\sigma <\sigma _{s} $. On the other hand, $\frac{n}{2} -\frac{2}{\sigma } <\frac{n}{r_{2} } $ is equivalent to $\frac{n}{2} +\frac{2}{\sigma } >\frac{n}{\hat{r}} $. It is trivial that $\frac{n}{2} +\frac{2}{\sigma } >\frac{n}{\hat{r}} $, since $\hat{r}>2$. Hence, we can take $s_{2} \left(>0\right)$ satisfying (3.76). Repeating the same argument as in the estimates of $C_{1} $ and $D_{1} $ in Lemma 3.5, we can get
\begin{equation} \label{GrindEQ__3_77_}
E_{1} \lesssim\left\| u\right\| _{S\left(H^{s} \right)}^{\sigma +1} ,~F_{1} \lesssim\left\| u\right\| _{S\left(H^{s} \right)}^{\sigma } \left\| v\right\| _{S\left(L^{2} \right)} ,
\end{equation}
whose proofs will be omitted.

Next, we estimate $E_{2} $ and $F_{2} $.
We can take $s_{3} \left(>0\right)$ satisfying the following system:
\begin{equation} \label{GrindEQ__3_78_}
\left\{\begin{array}{l} {\frac{1}{\hat{r}'} -\left(\sigma +1\right)\left(\frac{1}{r_{2} } -\frac{s_{3} }{n} \right)>\frac{b+s}{n} ,\;} \\ {0<s_{3} <s,} \\ {\frac{1}{r_{2} } >\frac{s_{3} }{n} .} \end{array}\right.
\end{equation}
In fact, we can see that the first equation in (3.78) is equivalent to
\begin{equation} \label{GrindEQ__3_79_}
\frac{n}{r_{2} } -\frac{n}{\sigma +1} \left(\frac{n-b-s}{n} -\frac{1}{\hat{r}} \right)<s_{3} .
\end{equation}
Hence the system (3.78) is equivalent to
\begin{equation} \label{GrindEQ__3_80_}
\max \left\{0,\;\frac{n}{r_{2} } -\frac{n}{\sigma +1} \left(\frac{n-b-s}{n} -\frac{1}{\hat{r}} \right)\right\}<s_{3} <\min \left\{s,\;\frac{n}{r_{2} } \right\}.
\end{equation}
In view of (3.68), we can see that
\[\frac{n}{r_{2} } -\frac{n}{\sigma +1} \left(\frac{n-b-s}{n} -\frac{1}{\hat{r}} \right)<\frac{n}{r_{2} } .\]
In view of (3.69), we can also see that $\frac{n}{r_{2} } -\frac{n}{\sigma +1} \left(\frac{n-b-s}{n} -\frac{1}{\hat{r}} \right)<s$ is equivalent to
\begin{equation} \label{GrindEQ__3_81_}
\sigma \left(n-2s\right)<4-2b.
\end{equation}
Since $\sigma<\sigma_{s}$, we have (3.81). Thus we can take $s_{3}\left(>0\right)$ satisfying (3.80).
Using the same argument as in the estimates of $C_{2} $ and $D_{2} $ in Lemma 3.5, we have
\begin{equation} \label{GrindEQ__3_82_}
E_{2} \lesssim\left\| u\right\| _{S\left(H^{s} \right)}^{\sigma +1} ,~ F_{2} \lesssim\left\| u\right\| _{S\left(H^{s} \right)}^{\sigma } \left\| v\right\| _{S\left(L^{2} \right)} ,
\end{equation}
whose proofs will be omitted. This completes the proof.
\end{proof}

\section{Proofs of main results}

In this section, we prove Theorem 1.3 and Theorem 1.7.

\begin{proof}[\textbf{Proof of Theorem 1.3.}]
The proof is similar to one of Theorem 1.4 of \cite{G17} and we only sketch the proof. We define
\[X=C\left(\R,\;H^{s} (\R^{n})\right)\bigcap L^{\gamma \left(r\right)} \left(\R,\;H_{r}^{s} (\R^{n})\right),\]
for any admissible pair $\left(\gamma \left(r\right),\;r\right)$, and
\[\left\| u\right\| _{X} =\left\| u\right\| _{S\left(H^{s} \right)}.\]
Let $M>0$ which will be chosen later. We define the complete metric space
\[D=\left\{u\in X:\;\left\| u\right\| _{X} \le M\right\},\; d\left(u,\;v\right)=\left\| u-v\right\| _{S\left(L^{2} \right)} .\]
We consider the mapping
\[T:\;u(t)\to S(t)u_{0} -i\int _{0}^{t}S(t-\tau )|x|^{-b} f\left(u\left(\tau \right)\right)d\tau  .\]
It follows from the Lemma 2.9 (Strichartz estimates) that
\begin{equation} \label{GrindEQ__4_1_}
\left\| Tu\right\| _{X} \le \left\| Tu\right\| _{S(\dot{H}^{s})} +\left\| Tu\right\| _{S\left(L^{2} \right)} \lesssim\left\| u_{0} \right\| _{H^{s} } +\left\| |x|^{-b} f\left(u\right)\right\| _{S'\left(L^{2} \right)} +\left\| |x|^{-b} f\left(u\right)\right\| _{S'(\dot{H}^{s})}.
\end{equation}
Using Lemma 3.5 and Lemma 3.6, we have
\begin{equation} \label{GrindEQ__4_2_}
\left\| |x|^{-b} f\left(u\right)\right\| _{S'\left(L^{2} \right)} \lesssim\left\| |x|^{-b} \left|u\right|^{\sigma } u\right\| _{S'\left(L^{2} \right)} \lesssim\left\| u\right\| _{S\left(H^{s} \right)}^{\sigma } \left\| u\right\| _{S\left(L^{2} \right)} ,
\end{equation}
\begin{equation} \label{GrindEQ__4_3_}
\left\| |x|^{-b} f\left(u\right)\right\| _{S'(\dot{H}^{s})} \lesssim\left\| u\right\| _{S\left(H^{s} \right)}^{\sigma +1} .
\end{equation}
In view of (4.1)--(4.3), we have
\begin{equation} \label{GrindEQ__4_4_}
\left\| Tu\right\| _{X} \le C\left( \left\| u_{0} \right\| _{H^{s} } +\left\| u\right\| _{S\left(H^{s} \right)}^{\sigma +1}\right) .
\end{equation}
Using the same argument as in Remark 2.6 of \cite{G17}, we have
\begin{equation} \label{GrindEQ__4_5_}
\left||x|^{-b} f\left(u\right)-|x|^{-b} f\left(v\right)\right|\lesssim|x|^{-b} \left(\left|u\right|^{\sigma } +\left|v\right|^{\sigma } \right)\left|u-v\right|.
\end{equation}
Using Lemma 2.9 (Strichartz estimates), Lemma 3.5 and Lemma 3.6, we have
\begin{eqnarray}\begin{split} \label{GrindEQ__4_6_}
d\left(Tu,\;Tv\right)&\le C\left\| |x|^{-b} \left(\left|u\right|^{\sigma } +\left|v\right|^{\sigma } \right)\left|u-v\right|\right\| _{S'\left(L^{2} \right)} \;\\
&\le C\left(\left\| u\right\| _{S\left(H^{s} \right)}^{\sigma } +\left\| v\right\| _{S\left(H^{s} \right)}^{\sigma } \right)d\left(u,\;v\right).
\end{split}\end{eqnarray}
Put $M=2C\left\| u_{0} \right\| _{H^{s} } $ and $\delta =2\left(4C\right)^{-\frac{\sigma +1}{\sigma } } $. If $\left\| u_{0} \right\| _{H^{s} } \le \delta $, then we have $CM^{\sigma } \le \frac{1}{4} $. Hence it follows from (4.4) and (4.6) that $T:\left(D,d\right)\to \left(D,d\right)$ is a contraction mapping. So there is a unique global solution satisfying (1.8). This completes the proof.
\end{proof}
\begin{proof}[\textbf{Proof of Theorem 1.7.}]

The proof is standard and we only sketch the proof (see e.g. \cite{D19}).
Let $u$ be the global solution of (1.1) with initial data $u_{0} \in H^{s} $ given in Theorem 1.3.
We can see that (1.10) is equivalent to
\begin{equation} \label{GrindEQ__4_7_}
{\mathop{\lim }\limits_{t\to \pm \infty }} \left\| e^{-it\Delta } u(t)-u_{0}^{\pm } \right\| _{H^{s} (\R^{n})} =0.
\end{equation}
In other words, it suffices to show that $e^{-it\Delta } u(t)$ converges in $H^{s} $ as $t_{1} ,\;t_{2} \to \pm \infty $.

\noindent Let $0<t_{1} <t_{2} <+\infty $. By using Strichartz estimates, we have
\begin{eqnarray}\begin{split} \label{GrindEQ__4_8_}
&\left\| e^{-it_{2} \Delta } u\left(t_{2} \right)-e^{-it_{1} \Delta } u\left(t_{1} \right)\right\| _{H^{s} } =\left\| \int _{t_{1} }^{t_{2} }e^{-i\tau \Delta } |x|^{-b} f\left(u\left(\tau \right)\right)d\tau  \right\| _{H^{s} } \\
&~~~~~~~~~~~~~~~~~\lesssim\left\| |x|^{-b} f\left(u\right)\right\| _{S'\left(\left(t_{1} ,\;t_{2} \right),\;\dot{H}^{s} \right)} +\left\| |x|^{-b} f\left(u\right)\right\| _{S'\left(\left(t_{1} ,\;t_{2} \right),\;L^{2} \right)} .
\end{split}
\end{eqnarray}
It follows from Lemma 3.5 and Lemma 3.6 that
\begin{equation} \label{GrindEQ__4_9_}
\left\| |x|^{-b} f\left(u\right)\right\| _{S'(\dot{H}^{s})} +\left\| |x|^{-b} f\left(u\right)\right\| _{S'\left(L^{2} \right)} \lesssim\left\| u\right\| _{S\left(H^{s} \right)}^{\sigma +1} <\infty .
\end{equation}
(4.8) and (4.9) implies that
\[\left\| e^{-it_{2} \Delta } u\left(t_{2} \right)-e^{-it_{1} \Delta } u\left(t_{1} \right)\right\| _{H^{s} (\R^{n})} \to 0,~\textnormal{as}~t_{1}, \;t_{2} \to +\infty. \]
Thus the limit
\[u_{0}^{+} :={\mathop{\lim }\limits_{t\to +\infty }} e^{-it\Delta } u\left(t\right)\]
exits in $H^{s} (\R^{n})$. This shows the small data scattering for positive time, the one for negative time is treated similarly. This concludes the proof.
\end{proof}


\end{document}